\title[The Moduli Problem of Lobb and Zentner for Colored sl(N)]{The Moduli Problem of Lobb and Zentner\\ and the Coloured sl(N) Graph Invariant}
\author{Jonathan Grant}
\address{Dept of Mathematical Sciences\\ Durham University\\ Science Laboratories\\ South Rd.\\ Durham\\ DH1 3LE\\ UK}
\email{jonathan.grant@durham.ac.uk}
\newcommand{\C}{\mathds C}
\newcommand{\G}{\mathds G}
\renewcommand{\P}{\mathds P}
\newcommand{\M}{\mathscr{M}}
\newcommand{\chuse}[2]{\left[\begin{array}{c} #1 \\ #2\end{array}\right]}
\newtheorem{thm}{Theorem}[section]
\newtheorem{lem}{Lemma}[section]
\newtheorem{prop}{Proposition}[section]
\theoremstyle{definition}\newtheorem{remark}[thm]{Remark}
\newtheorem*{acknowledgement}{Acknowledgement}
\let\c@lem=\c@thm
\let\c@prop=\c@thm
\begin{document}

\begin{abstract}
Motivated by a possible connection between the $\mathrm{SU}(N)$ instanton knot Floer homology of Kronheimer and Mrowka and $\mathfrak{sl}(N)$ Khovanov-Rozansky homology, Lobb and Zentner recently introduced a moduli problem associated to colourings of trivalent graphs of the kind considered by Murakami, Ohtsuki and Yamada in their state-sum interpretation of the quantum $\mathfrak{sl}(N)$ knot polynomial. For graphs with two colours, they showed this moduli space can be thought of as a representation variety, and that its Euler characteristic is equal to the $\mathfrak{sl}(N)$ polynomial of the graph evaluated at $1$. We extend their results to graphs with arbitrary colourings by irreducible anti-symmetric representations of $\mathfrak{sl}(N)$.
\end{abstract}

\maketitle

\section{Introduction}
In their paper \cite{LZ}, Lobb and Zentner introduced a moduli space of assignments of lines and planes to oriented trivalent graphs, corresponding to the Murakami, Ohtsuki and Yamada (MOY) state model interpretation of the Reshetikhin-Turaev  quantum $\mathfrak{sl}(N)$ link polynomial. Trivalent graphs considered by MOY have a decoration on edges, indicated by numbers $i$, corresponding to anti-symmetric tensor powers $\Lambda^i V$ of the standard $N$-dimensional representation $V$ of $\mathfrak{sl}(N)$, with the condition that at a trivalent vertex the signed sum (with signs given by orientations) of decorations of each edge is $0$. Graphs considered by Lobb and Zentner are decorated with elements in $\G(i,N)$, with the condition that at each trivalent vertex the span of the decorations of the `inward' pointing edges equals the span of the decorations of the `outward' pointing edges, and decorations of edges pointing the same way are orthogonal.

In \cite{LZ}, the authors proved that if a graph $\Gamma$ is coloured with $1$'s and $2$'s, then the Euler characteristic of their moduli space is equal to the value of the MOY polynomial evaluated at $1$. In this paper, we extend that result to all higher colourings of graphs.

The plan of this paper is to give the background for the $\mathfrak{sl}(N)$ graph polynomial in Section \ref{polynomialgraphs}, then state our result in Section \ref{modulispace} and relate the moduli space to a representation variety in Section \ref{relationship}, and then prove the main theorem in Sections \ref{moves} and \ref{proofs}.

\begin{acknowledgement} I would like to thank my advisor Andrew Lobb for suggesting the topic of this paper.\end{acknowledgement}

\section{The sl(N) Polynomial for Trivalent Graphs}\label{polynomialgraphs}
In their paper \cite{MOY}, MOY introduced a polynomial associated to certain coloured trivalent oriented graphs, in order to provide a state model interpretation of the quantum $\mathfrak{sl}(N)$ polynomial for links. The trivalent graphs considered are all oriented and planar, and are considered to have a colouring in $\{1,\ldots, N-1\}$ with the signed sum (signs given by the orientation) of colourings around a trivalent vertex equal to $0$. When drawing such graphs, we will usually suppress orientations, but use curved lines to indicate the relative orientations of edges around a vertex. The $\mathfrak{sl}(N)$ polynomial for links is related to the MOY polynomial by the relations in figure \ref{knot}
\begin{figure}[h]\[
\left(\begin{tikzpicture}[baseline=-0.65ex]
\draw[->] (-1,-1) -- (1,1);
\draw (1,-1) -- (0.5, -0.5);
\draw[<-] (-1,1) -- (-0.5,0.5);
\end{tikzpicture}\right)_N = q\left(\begin{tikzpicture}[baseline=-0.65ex]
\draw[->] (-1,-1) -- (-1,1);
\draw[->] (1,-1) -- (1,1);
\draw (-1.25,0.5) node {$1$};
\draw (1.25,0.5) node {$1$};
\end{tikzpicture}\right)_N
- \left( \begin{tikzpicture}[baseline=-0.65ex]
\draw[->] (-1,-1) to [out=90,in=270] (0,-0.33) to [out=90,in=270] (0,0.33) to [out=90,in=270] (-1,1);
\draw (1,-1) to [out=90,in=270] (0,-0.33);
\draw[->] (0,0.33) to [out=90,in=270] (1,1);
\draw (-1.25, -0.8) node {$1$};
\draw (-1.25,0.8) node {$1$};
\draw (1.25, -0.8) node {$1$};
\draw (1.25, 0.8) node {$1$};
\draw (0.25,0) node {$2$};
\end{tikzpicture}\right)_N
\]

\[
\left(\begin{tikzpicture}[baseline=-0.65ex]
\draw (-1,-1) -- (-0.5,-0.5);
\draw[->] (0.5,0.5) -- (1,1);
\draw[<-] (-1,1) -- (1,-1);
\end{tikzpicture}\right)_N = q^{-1}\left(\begin{tikzpicture}[baseline=-0.65ex]
\draw[->] (-1,-1) -- (-1,1);
\draw[->] (1,-1) -- (1,1);
\draw (-1.25,0.5) node {$1$};
\draw (1.25,0.5) node {$1$};
\end{tikzpicture}\right)_N
- \left( \begin{tikzpicture}[baseline=-0.65ex]
\draw[->] (-1,-1) to [out=90,in=270] (0,-0.33) to [out=90,in=270] (0,0.33) to [out=90,in=270] (-1,1);
\draw (1,-1) to [out=90,in=270] (0,-0.33);
\draw[->] (0,0.33) to [out=90,in=270] (1,1);
\draw (-1.25, -0.8) node {$1$};
\draw (-1.25,0.8) node {$1$};
\draw (1.25, -0.8) node {$1$};
\draw (1.25, 0.8) node {$1$};
\draw (0.25,0) node {$2$};
\end{tikzpicture}\right)_N
\]\caption{MOY resolutions of knot diagrams}\label{knot}\end{figure}

The edges in the knot diagram may be understood to be coloured $1$, with the standard $N$-dimensional representation $V$ of $\mathfrak{sl}(N)$. MOY also introduced an invariant of framed coloured links, given by figures \ref{colour1} and \ref{colour2}.
\begin{figure}\[
\left(\begin{tikzpicture}[baseline=-0.65ex]
\draw[->] (-1,-1) -- (1,1);
\draw (1,-1) -- (0.5, -0.5);
\draw[<-] (-1,1) -- (-0.5,0.5);
\draw (-1,0.75) node {$i$};
\draw (1,0.75) node {$j$}; 
\end{tikzpicture}\right)_N=
\sum_{k=0}^i (-1)^{k+(j+1)i}q^{i-k}\left( 
\begin{tikzpicture}
[baseline=-0.65ex]
\draw[->] (0,-1)-- (0,1);
\draw[->] (2,-1)-- (2,1);
\draw (0,-0.33) to [out=270,in=180] (0.5,-0.5) to [out=0,in=180] (1.5,-0.5) to [out=0, in=90] (2,-0.66);
\draw (0,0.33) to [out=90, in=180] (0.5,0.5) to [out=0,in=180] (1.5,0.5) to [out=0, in=270] (2,0.66);
\draw (0,-1.25) node {$j$};
\draw (2,-1.25) node {$i$};
\draw (-0.5,0) node {$j+k$};
\draw (0,1.25) node {$i$};
\draw (1, -0.66) node {$k$};
\draw (2.5,0) node {$i-k$};
\draw (1, 0.66) node {$j+k-i$};
\draw (2,1.25) node {$j$};
\end{tikzpicture}
\right)_N
\]\caption{MOY resolutions of a coloured knot diagram if $i\leq j$}\label{colour1}\end{figure}
\begin{figure}\[
\left(\begin{tikzpicture}[baseline=-0.65ex]
\draw[->] (-1,-1) -- (1,1);
\draw (1,-1) -- (0.5, -0.5);
\draw[<-] (-1,1) -- (-0.5,0.5);
\draw (-1,0.75) node {$i$};
\draw (1,0.75) node {$j$}; 
\end{tikzpicture}\right)_N=
\sum_{k=0}^i (-1)^{k+(i+1)j}q^{j-k}\left( 
\begin{tikzpicture}
[baseline=-0.65ex]
\draw[->] (0,-1)-- (0,1);
\draw[->] (2,-1)-- (2,1);
\draw (0,-0.66) to [out=90,in=180] (0.5,-0.5) to [out=0,in=180] (1.5,-0.5) to [out=0, in=270] (2,-0.33);
\draw (0,0.66) to [out=270, in=180] (0.5,0.5) to [out=0,in=180] (1.5,0.5) to [out=0, in=90] (2,0.33);
\draw (0,-1.25) node {$j$};
\draw (2,-1.25) node {$i$};
\draw (-0.5,0) node {$j-k$};
\draw (0,1.25) node {$i$};
\draw (1, -0.66) node {$k$};
\draw (2.5,0) node {$i+k$};
\draw (1, 0.66) node {$i+k-j$};
\draw (2,1.25) node {$j$};
\end{tikzpicture}
\right)_N
\]\caption{MOY resolutions of a coloured knot diagram if $i>j$}\label{colour2}\end{figure}
Negative crossings are similar, with $q$ replaced by $q^{-1}$. 

\section{A Moduli Space of Colourings}\label{modulispace}
In their paper \cite{LZ} Lobb and Zentner introduced a moduli space $\mathscr{M}(\Gamma)$ of colourings of a diagram $\Gamma$ by associating to an $i$-coloured edge an element of the complex Grassmannian $\G(i,N)$ in such a way that if the three edges around a vertex are coloured $i$, $j$ and $i+j$, then the $i$-plane and the $j$-plane are orthogonal and span the $(i+j)$-plane in $\C^N$. They showed that if $\Gamma$ is coloured with $1$'s and $2$'s, then
\[ \chi (\mathscr{M}(\Gamma))=(\Gamma)_N(1) \]
ie. the Euler characteristic is the MOY polynomial evaluated at $1$. It is tempting to think that in fact the Poincar\'{e} polynomial of $\mathscr{M}(\Gamma)$ is equal to $(\Gamma)_N$, but Lobb and Zentner showed that this is false in general. 

In this paper, we show that the same relation holds for all higher colourings as well. \begin{thm}\label{main}
For a coloured planar trivalent graph $\Gamma$, we have
\[ \chi(\M(\Gamma))=(\Gamma)_N(1). \]
\end{thm}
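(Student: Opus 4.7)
The plan is to establish the theorem by showing that $\chi(\M(\cdot))$ satisfies the same complete set of local skein relations on anti-symmetrically coloured planar trivalent graphs as the MOY polynomial $(\cdot)_N$ does after specialisation to $q=1$. Since such a set of MOY moves reduces every closed coloured graph to an integer multiple of the empty diagram, and since the base case of a single loop coloured by $k$ gives $\chi(\G(k,N))=\binom{N}{k}=\chuse{N}{k}(1)$, matching the relations locally will force $\chi(\M(\Gamma))=(\Gamma)_N(1)$ for every $\Gamma$.

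For each local move I would analyse the situation inside a disc $D$ through the restriction map
\[ \M(\Gamma)\longrightarrow \M(\Gamma\setminus D) \]
sending a colouring to its restriction away from the local piece. Stratifying the target by the combinatorial type of the boundary data on $\partial D$, each stratum is, up to iterated fibration, a product of Grassmannians and partial flag varieties over $\C$. Since $\chi$ is multiplicative on Zariski-locally trivial fibrations of complex algebraic varieties and additive on locally closed stratifications, $\chi(\M(\Gamma))$ is expressible as a finite sum of products of binomial coefficients. The $q=1$ specialisation of the MOY relation yields a combinatorial expression of the same shape, and the two are then matched by a quantum binomial identity at $q=1$. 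The description of $\M(\Gamma)$ as a representation variety (Section \ref{relationship}) is the technical device that makes these fibrations visible in the first place, by identifying the local moduli space with a space of equivariant maps out of a finite group presentation.

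The principal obstacle lies with the higher-colour bubble and square-type relations. There, the fibre of the restriction map is no longer a single Grassmannian but is naturally stratified by the dimensions of the various intersections of the subspaces attached to the incident edges. To compute its Euler characteristic I would iterate fibrations through partial flag varieties, arranging each stage as an honest Grassmannian bundle with explicit fibre dimension, and then repackage the resulting sum of binomials. Identifying the correct iterated fibration, and verifying that the combinatorial total is exactly the $q\to 1$ limit of the corresponding MOY polynomial on the local piece, is where the bookkeeping will be most delicate; I expect the relevant identities to reduce to standard manipulations of $q$-binomials evaluated at $q=1$ that are already implicit in the MOY skein calculus.
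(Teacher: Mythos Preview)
Your overall strategy is the same as the paper's: verify that $\chi(\M(\cdot))$ satisfies the MOY moves at $q=1$, then invoke the fact that these moves determine the graph polynomial. Two points of divergence are worth flagging. First, the representation variety of Section~\ref{relationship} plays no role in the paper's proof of Theorem~\ref{main}; the fibrations and case analyses in Section~\ref{proofs} are carried out directly on the moduli space of subspace decorations, so you need not route through $\mathrm{SU}(N)$ representations. Second, where you propose to use additivity of $\chi$ on locally closed algebraic stratifications (a clean and entirely valid approach for complex varieties), the paper instead invokes Hardt triviality to thicken the degenerate stratum to an open neighbourhood and then runs an inclusion--exclusion argument; your formulation is arguably simpler and avoids that detour, though the case splits (your ``combinatorial type of the boundary data'') end up being exactly the trichotomies the paper writes down for Moves~4 and~5.

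One genuine gap: you assume without comment that a finite list of MOY moves \emph{reduces every closed coloured graph to a scalar}. The paper does not take this for granted --- it is the content of Theorem~\ref{uniquely}, proved via Wu's colour-lowering trick together with a separate argument (Proposition~\ref{uniquelyonetwo}) for $\{1,2\}$-coloured graphs using the link-diagram resolution. The paper even remarks that no reference for the $\{1,2\}$ case could be found. Your plan is sound once this reduction is in hand, but you should either supply that argument or cite it explicitly.
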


\section{Relationship with Representation Varieties}\label{relationship}
Motivated by a conjectural relationship between Khovanov-Rozansky homology \cite{khovanov2008matrix} and the instanton knot Floer homology associated to $\mathrm{SU}(N)$ of Kronheimer and Mrowka \cite{kronheimer2011knot}, we can relate this moduli space to a space of representations of the fundamental group of the graph complement in $S^3$ into $\mathrm{SU}(N)$. Let $\Gamma$ be the graph in question. In analogy to the Wirtinger presentation of the knot group, the group $G_\Gamma :=\pi_1(S^3\backslash \Gamma)$ can be presented as
\[ 
\langle x_1,\ldots x_m|R_1,\ldots,R_c \rangle
\]
where $x_j$ represents a positively-oriented meridian to the $j$th edge, and the relations are given as follows: at the $i$th trivalent vertex, either two edges flow into the vertex or two edges flow out. Let $x$ and $y$ be the oriented meridians corresponding to these edges, and let $z$ be the oriented meridian corresponding to the remaining edge. Suppose, in the planar diagram, travelling in the anti-clockwise direction around the vertex meets $(x,y,z)$ in that order. Then if the two edges flow into the vertex, let $R_i=xyz^{-1}$, and if the two edges flow out let $R_i=yxz^{-1}$ (see figures \ref{flowin} and \ref{flowout}).

\begin{figure}[h]\center\begin{tikzpicture}[baseline=-0.65ex]
\draw (-1,-1) to [out=90,in=270] (0,0);
\draw[->](0,0)-- (0,1);
\draw (1,-1) to [out=90,in=270] (0,0);
\draw (-1.5,-0.9) node {$x$};
\draw (1.5,-0.9) node {$y$};
\draw (0.5,0.5) node {$z$};
\end{tikzpicture} \quad \quad $R_i=xyz^{-1}$
\caption{}\label{flowin}\end{figure}

\begin{figure}[h]
\center \begin{tikzpicture}[baseline=-0.65ex]
\draw (0,-1) -- (0,0);
\draw[->] (0,0) to [out=90,in=270] (-1,1);
\draw[->] (0,0) to [out=90,in=270] (1,1);
\draw (1.5,0.9) node {$x$};
\draw (-1.5,0.9) node {$y$};
\draw (0.5,-0.5) node {$z$};
\end{tikzpicture} \quad \quad $R_i=yxz^{-1}$
\caption{}\label{flowout}\end{figure}

We define $\zeta=\exp(i\pi/N)$, and for each $j$ we set
\[ \Phi_j=\zeta^j\mathrm{diag}(-1,-1,\cdots,-1,1,1,\cdots,1) \]
which is a diagonal $N\times N$ matrix with the first $j$ elements on the diagonal equal to $-\zeta^j$, and the last $N-j$ elements equal to $\zeta^j$.

Now we let $R_{\Phi_j}(G_\Gamma;\mathrm{SU}(N))$ be the subspace of homomorphisms $\rho:G_\Gamma\to \mathrm{SU}(N)$, with the compact-open topology, with the condition that an oriented meridian $m$ to an edge coloured by $j$ must satisfy
\[ \rho(m)\sim \Phi_j \]
ie. $\rho(m)$ is conjugate to $\Phi_j$ in $\mathrm{SU}(N)$.
\begin{lem}\label{orthog}
If $S,T\in \mathrm{SU}(N)$ are conjugate to $\Phi_i$ and $\Phi_j$ respectively, then $ST$ is conjugate to $\Phi_{i+j}$ if and only if the $(-\zeta^i)$-eigenspace of $S$ is orthogonal to the $(-\zeta^j)$-eigenspace of $T$.
\end{lem}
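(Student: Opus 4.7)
My plan is to exploit the very simple structure of $S$ and $T$: each is a scalar multiple of a reflection. Writing $V_-$ for the $(-\zeta^i)$-eigenspace of $S$, with orthogonal complement $V_+$ (the $\zeta^i$-eigenspace, since $S$ is unitary), and letting $P$ be the orthogonal projection onto $V_-$, we have the clean identity
\[ S = \zeta^i(I - 2P). \]
Analogously $T = \zeta^j(I - 2Q)$ where $Q$ is the orthogonal projection onto the $(-\zeta^j)$-eigenspace $W_-$ of $T$. Multiplying gives
\[ ST = \zeta^{i+j}\bigl(I - 2P - 2Q + 4PQ\bigr). \]

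For the ``only if'' direction I would use just a trace computation. A direct count of eigenvalues gives $\mathrm{tr}(\Phi_{i+j}) = (N-2(i+j))\zeta^{i+j}$, whereas the formula above yields
\[ \mathrm{tr}(ST) = \zeta^{i+j}\bigl(N - 2i - 2j + 4\,\mathrm{tr}(PQ)\bigr). \]
If $ST \sim \Phi_{i+j}$, these traces must agree, forcing $\mathrm{tr}(PQ) = 0$. Picking orthonormal bases $\{e_a\}$ of $V_-$ and $\{f_b\}$ of $W_-$ and expanding, one finds
\[ \mathrm{tr}(PQ) = \sum_{a,b}|\langle e_a, f_b\rangle|^2, \]
a sum of non-negative terms that vanishes precisely when $V_- \perp W_-$.

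For the converse, I would avoid the projection formula and argue directly. Assume $V_- \perp W_-$. Then $V_- \subseteq W_+$ and $W_- \subseteq V_+$, so the orthogonal decomposition
\[ \C^N = V_- \oplus W_- \oplus (V_+ \cap W_+) \]
holds, with the last summand of dimension $N-i-j$. On $V_-$, $T$ acts as $\zeta^j$ and then $S$ as $-\zeta^i$, so $ST$ acts as $-\zeta^{i+j}$; on $W_-$, $T$ acts as $-\zeta^j$ and $S$ as $\zeta^i$, again giving $-\zeta^{i+j}$; on $V_+\cap W_+$, both operators act by their positive eigenvalues and $ST$ acts as $\zeta^{i+j}$. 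Thus $ST$ has eigenvalues $-\zeta^{i+j}$ (multiplicity $i+j$) and $\zeta^{i+j}$ (multiplicity $N-i-j$) with mutually orthogonal eigenspaces, so $ST$ is conjugate to $\Phi_{i+j}$ in $\mathrm{SU}(N)$.

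There is no real obstacle here; the only minor subtlety is recognising that the trace alone is strong enough to force orthogonality in the ``only if'' direction, so one need not use the hypothesis that $ST$ is unitarily conjugate to $\Phi_{i+j}$ (rather than merely having the correct spectrum). The reflection-form reparametrisation of $S$ and $T$ is what makes everything transparent.
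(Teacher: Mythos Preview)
Your argument is correct. The converse direction (orthogonality implies $ST\sim\Phi_{i+j}$) is essentially identical to the paper's: both decompose $\C^N$ as $V_-\oplus W_-\oplus(V_+\cap W_+)$ and read off the eigenvalues of $ST$ on each summand. The ``only if'' direction, however, is handled differently. The paper simply asserts that ``the converse is clear from the above argument,'' leaving the reader to extract it from the eigenspace analysis; your trace computation is more explicit and arguably cleaner. Writing $S=\zeta^i(I-2P)$ and $T=\zeta^j(I-2Q)$ to obtain $\mathrm{tr}(PQ)=0$, and then recognising $\mathrm{tr}(PQ)=\sum_{a,b}|\langle e_a,f_b\rangle|^2$ as a sum of non-negative terms, is a self-contained argument that makes the orthogonality conclusion immediate. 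What you gain is transparency and the observation that only the trace of $ST$ (not its full spectrum) is needed; what the paper's approach gains is brevity, at the cost of leaving the reader to fill in why non-orthogonality would spoil the eigenvalue count.
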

\begin{proof}
If $v$ is in the $(-\zeta^i)$-eigenspace of $S$, then $v$ is orthogonal to the $(-\zeta^j)$-eigenspace of $T$ and so must be contained in the $\zeta^j$-eigenspace of $T$. Hence $ST(v)=S(\zeta^jv)=-\zeta^{i+j}v$. Similarly, if $v$ is in the $(-\zeta^j)$-eigenspace of $T$, then $ST(v)=-\zeta^{i+j}v$. It follows by calculation that $ST$ has an $(i+j)$-dimensional $(-\zeta^{i+j})$-eigenspace, and a $(N-i-j)$-dimensional $\zeta^{i+j}$-eigenspace, and so is conjugate to $\Phi_{i+j}$ as required. The converse is clear from the above argument.
\end{proof}
We can therefore define a natural map
\[ D:R_{\Phi_j}(G_\Gamma;\mathrm{SU}(N))\to \M(\Gamma) \]
by assigning to an $i$-coloured edge in $\Gamma$ the $(-\zeta^i)$-eigenspace of the image of its oriented meridian in $\mathrm{SU}(N)$. At a trivalent vertex we have the relation $xy=z$, so $\rho(x)\rho(y)=\rho(z)$. As $\rho(z)\sim \Phi_{i+j}$, $\rho(x)\sim \Phi_i$ and $\rho(y)\sim \Phi_j$, Lemma \ref{orthog} implies that the $(-\zeta^i)$-eigenspace of $\rho(x)$ is orthogonal to the $(-\zeta^j)$-eigenspace of $\rho(y)$, so the assignment of these eigenspaces to the edges gives a permissible colouring of $\Gamma$.
\begin{thm}
The map \[ D:R_{\Phi_j}(G_\Gamma;\mathrm{SU}(N))\to \M(\Gamma) \] is a homeomorphism.
\end{thm}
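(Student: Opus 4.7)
The plan is to write down an explicit continuous two-sided inverse $E:\M(\Gamma)\to R_{\Phi_j}(G_\Gamma;\mathrm{SU}(N))$. Given a colouring $c=\{V_e\}_{e\in E(\Gamma)}\in\M(\Gamma)$, the idea is to set, for each meridian generator $x_e$ of an edge $e$ coloured by $i_e$,
\[
E(c)(x_e) \;:=\; \zeta^{i_e}\bigl(I - 2P_{V_e}\bigr),
\]
where $P_{V_e}$ denotes orthogonal projection onto $V_e$. This element is unitary with spectrum $\{-\zeta^{i_e},\zeta^{i_e}\}$ of multiplicities $(i_e,N-i_e)$, and has determinant $(\zeta^{i_e})^N(-1)^{i_e}=(-1)^{i_e}(-1)^{i_e}=1$, so it lies in $\mathrm{SU}(N)$ and is conjugate to $\Phi_{i_e}$, with its $(-\zeta^{i_e})$-eigenspace equal to $V_e$.

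The main step, and the one that requires the coloured graph conditions in an essential way, is to verify that $E(c)$ respects the vertex relations in $G_\Gamma$. At a flow-in vertex with meridians $x,y,z$ of colours $i,j,i+j$, the colouring condition gives an orthogonal decomposition $V_{i+j}=V_i\oplus V_j$, and hence an orthogonal decomposition $\C^N = V_i\oplus V_j\oplus V_{i+j}^\perp$ which simultaneously diagonalises $E(c)(x)$ and $E(c)(y)$. In particular these two matrices commute, and a direct eigenvalue check gives
\[
E(c)(x)\,E(c)(y) \;=\; \zeta^{i+j}\bigl(I - 2P_{V_{i+j}}\bigr) \;=\; E(c)(z),
\]
since on $V_i$ and on $V_j$ the product acts as $-\zeta^{i+j}$ (using $V_j\subset V_i^\perp$ on the first factor and $V_i\subset V_j^\perp$ on the second) while on $V_{i+j}^\perp$ it acts as $\zeta^{i+j}$. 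The commutativity just noted then yields $E(c)(y)\,E(c)(x)=E(c)(z)$ at flow-out vertices as well, so $E(c)$ defines a genuine homomorphism in $R_{\Phi_j}(G_\Gamma;\mathrm{SU}(N))$.

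It then remains to check that $E$ and $D$ are mutually inverse and both continuous. The composition $D\circ E$ is the identity by construction, since the $(-\zeta^{i_e})$-eigenspace of $\zeta^{i_e}(I-2P_{V_e})$ is exactly $V_e$. For $E\circ D=\mathrm{id}$, given any $\rho\in R_{\Phi_j}(G_\Gamma;\mathrm{SU}(N))$, each $\rho(x_e)$ is a unitary matrix with exactly the two eigenvalues $\pm\zeta^{i_e}$, so its two eigenspaces are necessarily orthogonal complements and $\rho(x_e)$ is completely determined by its $(-\zeta^{i_e})$-eigenspace $V_e$ via the formula $\rho(x_e)=\zeta^{i_e}(I-2P_{V_e})$. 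Continuity of $D$ follows from the continuous dependence of the spectral projections of a normal matrix with fixed (two-point) spectrum on the matrix itself, and continuity of $E$ follows from the continuity of $V\mapsto P_V$ on each Grassmannian $\G(i_e,N)$ together with the fact that a representation of a finitely presented group, equipped with the compact-open topology, depends continuously on its values on the generators.
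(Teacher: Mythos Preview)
Your proof is correct and follows essentially the same approach as the paper: both construct the inverse by sending each edge colouring $V_e$ to the unique element of $\mathrm{SU}(N)$ conjugate to $\Phi_{i_e}$ with $(-\zeta^{i_e})$-eigenspace $V_e$, and then observe this gives a continuous two-sided inverse to $D$. Your version simply spells out the explicit formula $\zeta^{i_e}(I-2P_{V_e})$ for this element, verifies the vertex relations via the orthogonal decomposition (where the paper just says ``this satisfies the required relations''), and gives a fuller justification of continuity in both directions.
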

\begin{proof}
We define an inverse as follows. Let $A_e$ be the colouring of edge $e$ coloured $i$, and let $S_{A_e}$ be the unique element in $\mathrm{SU}(N)$ that is conjugate to $\Phi_i$ and has $A_e$ as its $(-\zeta^i)$-eigenspace. Then define $\rho(m_e)=S_{A_e}$ where $m_e$ is the meridian to $e$. Because of the conditions on the colourings in $\M(\Gamma)$, this satisfies the required relations to be an element in $R_{\Phi_j}(G_\Gamma;\mathrm{SU}(N))$. It is then easy to see that this is a continuous inverse of $D$.
\end{proof}

\section{MOY moves}\label{moves}
For convenience, we fix the following notation:
\[ [k]=\frac{q^k-q^{-k}}{q-q^{-1}}=q^{k-1}+q^{k-3}+\cdots+q^{3-k}+q^{1-k}\]
\[ [k]!=[k][k-1]\cdots [2] \]
\[ \chuse{i}{j}=\frac{[i]!}{[j]![i-j]!}. \]\pagebreak
The MOY graph invariant satisfies the local relations in figure \ref{MOYmoves} (cf. MOY \cite{MOY}).

\begin{figure}[h!]\begin{equation}\label{zero}\tag{Move 0}
\left(\begin{tikzpicture}[baseline=-0.65ex]
\draw (0,0) circle (0.5);
\draw (0.7,0) node {$i$};
\end{tikzpicture}\right)_N = \chuse{N}{i}
\end{equation}
\begin{equation}\label{one}\tag{Move 1}
\left(\begin{tikzpicture}[baseline=-0.65ex,scale=0.6]
\draw (0,-1.5) -- (0,1.5);
\draw (0,-0.5) to [out=270, in=270] (1,-0.2) to [out=90, in=270] (1,0.2) to [out=90, in=90] (0,0.5);
\draw (-0.25, -1) node {$i$};
\draw (-0.25, 1) node {$i$};
\draw (-0.7, 0) node {$j+i$};
\draw (1.25,0) node {$j$};
\end{tikzpicture}\right)_N = \chuse{N-i}{j} 
\left(\begin{tikzpicture}[baseline=-0.65ex,scale=0.6]
\draw (0,-1.5) -- (0,1.5);
\draw (-0.25,0) node {$i$};
\draw (0.25,0) node {};
\end{tikzpicture}\right)_N
\end{equation}
\begin{equation}\label{two}\tag{Move 2}
\left(\begin{tikzpicture}[baseline=-0.65ex,scale=0.6]
\draw (0.5,-1.5) -- (0.5,-0.6);
\draw (0.5, -0.6) to [out=90, in=270] (0,0) to [out=90,in=270] (0.5,0.6);
\draw (0.5, -0.6) to [out=90, in=270] (1,0) to [out=90,in=270] (0.5,0.6);
\draw (0.5,0.6) -- (0.5,1.5);
\draw (0.75, -1) node {$i$};
\draw (0.75, 1.1) node {$i$};
\draw (-0.7, 0) node {$i-j$};
\draw (1.25, 0) node {$j$};
\end{tikzpicture}\right)_N = \chuse{i}{j}
\left(\begin{tikzpicture}[baseline=-0.65ex,scale=0.6]
\draw (0.5,-1.5) -- (0.5, 1.5);
\draw (0.75, 0) node {$i$};
\draw (0.25, 0) node {};
\end{tikzpicture}\right)_N
\end{equation}
\begin{equation}\label{three}\tag{Move 3}
\left(\begin{tikzpicture}[baseline=-0.65ex,scale=0.6]
\draw (0,-1.5) -- (0,-0.5);
\draw (0,-0.5) to [out=90,in=270] (-1,0.5) to [out=90,in=270] (-2,1.5);
\draw (-1,0.5) to [out=90, in=270] (0,1.5);
\draw (0,-0.5) to [out=90,in=270] (1,0.5) to [out=90,in=270] (1,1.5);
\draw (1.3,-1) node {$i+j+k$};
\draw (1.25, 1.25) node {$k$};
\draw (-1.5,0) node {$i+j$};
\draw (-2.25, 1.25) node {$i$};
\draw (-0.5,1.25) node {$j$};
\end{tikzpicture}\right)_N=
\left(\begin{tikzpicture}[baseline=-0.65ex,scale=0.6]
\draw (0,-1.5) -- (0,-0.5);
\draw (0,-0.5) to [out=90,in=270] (-1,0.5) to [out=90,in=270] (-1,1.5);
\draw (1,0.5) to [out=90, in=270] (2,1.5);
\draw (0,-0.5) to [out=90,in=270] (1,0.5) to [out=90,in=270] (0,1.5);
\draw (1.3,-1.25) node {$i+j+k$};
\draw (2.25, 1.25) node {$k$};
\draw (1.5,0) node {$j+k$};
\draw (-1.25, 1.25) node {$i$};
\draw (0.5,1.25) node {$j$};
\end{tikzpicture}\right)_N
\end{equation}
\begin{equation}\label{four}\tag{Move 4}
\left(\begin{tikzpicture}[baseline=-0.65ex]
\draw (0,-1) to [out=90,in=180] (0.5,-0.5) to [out=0,in=180] (1,-0.5) to [out=0, in=90] (1.5,-1);
\draw (0.5,-0.5) to [out=180, in=270] (0,0) to [out=90,in=180] (0.5,0.5);
\draw (1,-0.5) to [out=0, in=270] (1.5,0) to [out=90,in=0] (1,0.5);
\draw (0,1) to [out=270,in=180] (0.5,0.5) to [out=0,in=180] (1,0.5) to [out=0, in=270] (1.5,1);
\draw (-0.25, -0.75) node {$1$};
\draw (1.75, -0.75) node {$i$};
\draw (-0.25, 0) node {$i$};
\draw (-0.25, 0.75) node {$1$};
\draw (1.75, 0.75) node {$i$};
\draw (1.75, 0) node {$1$};
\draw (0.75,-0.75) node {$i+1$};
\draw (0.75,0.75) node {$i+1$};
\end{tikzpicture}\right)_N
= [N-i-1]
\left(\begin{tikzpicture}[baseline=-0.65ex]
\draw (0,-1) to [out=90, in=180] (0.5,-0.5) to [out=0,in=180] (1,-0.5) to [out=0,in=90] (1.5,-1);
\draw (0,1) to [out=270, in=180] (0.5,0.5) to [out=0,in=180] (1,0.5) to [out=0,in=270] (1.5,1);
\draw (0.75,-0.5) to [out=180, in=270] (0.25,0) to [out=90,in=180] (0.75,0.5);
\draw (-0.25, -0.75) node {$1$};
\draw (1.75, -0.75) node {$i$};
\draw (1, 0) node {$i-1$};
\draw (-0.25, 0.75) node {$1$};
\draw (1.75, 0.75) node {$i$};
\end{tikzpicture}\right)_N
+
\left(\begin{tikzpicture}[baseline=-0.65ex]
\draw (0,-1) -- (0,1);
\draw (1,-1) -- (1,1);
\draw (-0.25,0) node {$1$};
\draw (1.25, 0) node {$i$};
\end{tikzpicture}\right)_N
\end{equation}
\begin{equation}\label{five}\tag{Move 5}
\left(\begin{tikzpicture}[baseline=-0.65ex]
\draw (0,-1)-- (0,1);
\draw (2,-1)-- (2,1);
\draw (0,-0.33) to [out=270,in=180] (0.5,-0.5) to [out=0,in=180] (1.5,-0.5) to [out=0, in=90] (2,-0.66);
\draw (0,0.33) to [out=90, in=180] (0.5,0.5) to [out=0,in=180] (1.5,0.5) to [out=0, in=270] (2,0.66);
\draw (0.1,-1.25) node {$1$};
\draw (1.3,-1.25) node {$i+j-1$};
\draw (0.45,0) node {$i+k$};
\draw (0.1,1.25) node {$i$};
\draw (1, -0.75) node {$i+k-1$};
\draw (1.6,0) node {$j-k$};
\draw (1, 0.66) node {$k$};
\draw (2,1.25) node {$j$};
\end{tikzpicture}\right)_N
= \chuse{j-1}{k-1}\left(
\begin{tikzpicture}[baseline=-0.65ex]
\draw (0,-1) to [out=90,in=270] (0.5,-0.33) to [out=90,in=270] (0.5,0.33) to [out=90,in=270] (0,1);
\draw (1,-1) to [out=90,in=270] (0.5,-0.33);
\draw (1,1) to [out=270,in=90] (0.5,0.33);
\draw (0, -1.25) node {$1$};
\draw (0.9,-1.25) node {$i+j-1$};
\draw (0,1.25) node {$i$};
\draw (1,1.25) node {$j$};
\draw (1,0) node {$i+j$};
\end{tikzpicture}\right)_N
+\chuse{j-1}{k}
\left(\begin{tikzpicture}[baseline=-0.65ex]
\draw (0,-1) -- (0,1);
\draw (1,-1) -- (1,1);
\draw (0,0.33) to [out=270,in=180] (0.5,0) to [out=0,in=90] (1,-0.33);
\draw (0,1.25) node {$i$};
\draw (0,-1.25) node {$1$};
\draw (1,1.25) node {$j$};
\draw (0.9,-1.25) node {$i+j-1$};
\draw (0.5,0.3) node {$i-1$};
\end{tikzpicture}\right)_N
\end{equation}
\caption{The six MOY moves}\label{MOYmoves}\end{figure}

In all cases orientations can be chosen arbitrarily, but must be chosen consistently for two sides of a given move.

\begin{thm}\label{uniquely}
The six MOY moves uniquely determine the MOY $\mathfrak{sl}(N)$ polynomial for coloured oriented trivalent planar graphs.
\end{thm}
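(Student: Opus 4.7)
The plan is to show that the six moves provide a complete reduction procedure: repeated application to any closed coloured MOY graph $\Gamma$ expresses $(\Gamma)_N$ as a $\mathds{Z}[q,q^{-1}]$-linear combination of evaluations of disjoint unions of colour-labelled circles, each of which is then determined by Move 0. Uniqueness of $(\Gamma)_N$ follows immediately.

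I would proceed by induction on the lexicographically-ordered complexity $c(\Gamma)=(m(\Gamma),v(\Gamma))$, where $m(\Gamma)$ denotes the maximum colour appearing on any edge and $v(\Gamma)$ the number of trivalent vertices. The base case $v(\Gamma)=0$ is immediate: $\Gamma$ is a disjoint union of coloured circles, and iterating Move 0 (combined with the disjoint-union multiplicativity that can be derived from the moves applied to a trivial local configuration) evaluates it.

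For the inductive step I would first reduce to graphs with $m(\Gamma)\leq 2$ and then handle those separately. To reduce the maximum colour, pick an edge $e$ of colour $k\geq 3$. Using Move 3 to rearrange the local tree of trivalent vertices near $e$ (possibly combined with a reverse application of Move 1 or Move 2 to introduce an auxiliary colour-$1$ strand alongside $e$, incurring a tracked factor $\chuse{N-k}{1}=[N-k]$), I would bring $e$ into the position of the $(i+j-1)$-coloured edge on the left-hand side of Move 5 with parameters chosen so that $i+j-1=k$. The right-hand side of Move 5 then rewrites the configuration as a linear combination of two graphs in which no edge in the affected region has colour exceeding $k-1$, strictly decreasing $m(\Gamma)$. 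Iterating this step terminates at $m(\Gamma)\leq 2$.

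Once $m(\Gamma)\leq 2$, every trivalent vertex has colour profile $(1,1,2)$. A standard Euler-characteristic argument for planar trivalent graphs ensures that any such $\Gamma$ contains either a monogon (reducible by Move 1), a digon (reducible by Move 2), or a square matching the left-hand side of Move 4. Applying the corresponding move strictly decreases $v(\Gamma)$ while preserving the bound $m(\Gamma)\leq 2$, so the induction closes. The main obstacle will be \emph{Step 3}: verifying that the preparatory rearrangements (Move 3, and any reverse applications of Moves 1 or 2 used to manufacture the left-hand side of Move 5) can always be arranged to yield a net strict decrease in complexity. This likely requires refining the lexicographic pair by weighting edges of the maximum colour more heavily than auxiliary colour-$1$ edges introduced during preparation, and confirming that such a well-founded measure works uniformly across all high-colour local configurations is the crux of the argument.
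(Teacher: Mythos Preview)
Your reduction of the maximum colour is in the right spirit and close to the paper's argument: the paper also peels off a colour-$1$ strand from each end of an $m$-edge via a reverse application of \ref{two} (incurring factors $1/[j]$ and $1/[l]$), reorganises with \ref{three}, and then applies \ref{five} with $i=k=1$ to rewrite the configuration in terms of graphs with strictly fewer $m$-coloured edges. So your ``main obstacle'' is resolved there simply by tracking the number of edges of the top colour rather than by refining the lexicographic order.

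The genuine gap is in your treatment of the $\{1,2\}$-coloured case. The Euler-characteristic claim as you state it is not correct: the argument for planar trivalent graphs only produces a face of degree at most $5$, and while one can check that odd-degree faces are excluded by the merge/split parity along $1$-cycles, the degree-$4$ faces are not all \ref{four} squares. A face with boundary colours $(1,1,1,2)$ matches the inner square of \ref{five} (with $i=k=1$, $j=2$), not \ref{four}; and a face with boundary colours $(1,1,1,1)$ --- a closed $1$-cycle of length four with four external $2$-edges --- matches none of the six moves directly. You have not shown that such a face forces some other reducible face to exist, and this is far from ``standard''. The paper sidesteps this entirely with a different idea: given a $\{1,2\}$-graph $\Gamma$ with $n$ thick edges, it realises $\Gamma$ as the maximal-$2$ resolution of a link diagram $D$ with $n$ crossings, chooses the crossings so that $D$ is an unlink, and then inducts on $n$ using the skein relation and the known value of the unlink. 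The paper even remarks that it could not locate a proof of the $\{1,2\}$ case in the literature, which should make you suspicious of any appeal to a ``standard'' reduction here.
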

To prove this, we first specialise to $\{1,2\}$-coloured graphs:
\begin{prop}\label{uniquelyonetwo}
The six MOY moves, specialised to colourings in $\{1,2\}$, determine the $\mathfrak{sl}(N)$ polynomial for oriented trivalent plane graphs coloured with $\{1,2\}$.
\end{prop}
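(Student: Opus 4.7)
The plan is to prove this by induction on the number of vertices $V(\Gamma)$, showing that any $\{1,2\}$-coloured trivalent planar graph can be reduced via the six MOY moves to a linear combination of disjoint unions of circles, whose values are determined by Move 0. The base case $V(\Gamma)=0$ is immediate.

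For the inductive step, the key structural observation is that the only admissible colouring of a trivalent vertex in the $\{1,2\}$ setting is the multi-set $\{1,1,2\}$, so the 1-coloured edges of $\Gamma$ form a $2$-regular subgraph (a disjoint union of cycles), while the 2-coloured edges form a perfect matching on the vertex set. Assuming without loss of generality that $\Gamma$ is connected with $V>0$ vertices, Euler's formula yields $E=3V/2$ and $F=V/2+2$, so the average face size is $6V/(V+4)<6$, and hence some face has size at most $5$. I would then carry out a case analysis on a face $f$ of smallest size. If $f$ is a digon, its two edges have colour pattern $(1,1)$ or $(1,2)$ — the pattern $(2,2)$ being excluded because each vertex sees only one 2-edge — and Move 2 or Move 1 respectively eliminates it. If $f$ has size $4$, the enumeration of admissible patterns (no two adjacent 2-edges) gives the cases $(1,2,1,2)$, $(1,1,1,2)$ and $(1,1,1,1)$; the first is handled by Move 4 at $i=1$, the second by Move 5 with parameters $(i,j,k)=(1,2,1)$, and the third requires a composition involving Move 3. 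If $f$ is a triangle or pentagon, Move 3 is applied to re-associate edges near $f$ so as to produce a new graph with a smaller minimal face, reducing to the previous cases.

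The main obstacle is that Move 3 and certain symmetric variants of Moves 4 and 5, when applied with external legs in $\{1,2\}$, force internal edges of colour at least $3$ to appear, since $i+j+k\geq 3$ whenever $i,j,k\geq 1$. The reduction must therefore be allowed to pass temporarily through higher-coloured intermediate graphs, and one has to check that such configurations can themselves be fully reduced back to $\{1,2\}$-coloured graphs with strictly fewer vertices via further applications of the six MOY moves. I would close this loop through a secondary induction on a complexity measure combining the vertex count with the number of ``higher-colour'' edges, together with an explicit termination check for the intermediate reductions produced by each non-trivial case; alternatively, one could package the problematic configurations into a handful of composite moves starting and ending in $\{1,2\}$-coloured graphs and verify them by a direct but tedious computation.
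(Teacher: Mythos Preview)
Your approach differs fundamentally from the paper's, and the obstacle you yourself flag is a genuine gap that the proposal does not close.

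The paper bypasses the small-face case analysis entirely. It inducts on the number $n$ of $2$-coloured edges: replace each $2$-edge of $\Gamma$ by a crossing to obtain a link diagram $D$ whose maximally-singular resolution is $\Gamma$, choosing the over/under data so that $D$ represents an unlink $U$. Since MOY showed that the $\{1,2\}$ MOY moves already imply Reidemeister invariance for links with all strands coloured $1$, any polynomial satisfying those moves must assign $[N]^d$ to $U$. Expanding $(U)_N$ via the skein relations then writes it as $(-1)^n(\Gamma)_N$ plus a signed combination of graphs with strictly fewer $2$-edges, so $(\Gamma)_N$ is determined by induction. No Euler-formula argument, no odd-size faces, and crucially no excursion outside $\{1,2\}$ is needed.

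Your reduction, by contrast, stalls exactly where you say it does: the $(1,1,1,1)$ square, triangles, and pentagons. Any application of Move~3 with incident labels in $\{1,2\}$ forces an internal edge of colour at least $3$, and after that step you are outside the class covered by your inductive hypothesis. Neither remedy you sketch is actually carried out, and neither is evidently sound: Move~3 preserves the vertex count, so a lexicographic induction on (vertices, number of higher-colour edges) has no obvious decreasing quantity; and the ``handful of composite moves starting and ending in $\{1,2\}$'' is precisely the unproved content of the proposition. Nor can you appeal to the general colour-lowering procedure used elsewhere in the paper, since that argument invokes the present proposition as its base case and would make the reasoning circular. As written, this is an outline with the hard step missing rather than a proof.
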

\begin{proof}
Given a diagram coloured in $\{1,2\}$, we can use \ref{zero} to remove closed loops coloured with $2$, so suppose the remaining graph $\Gamma$ has $n$ edges coloured $2$. We use the relationship with knot diagrams in Section \ref{polynomialgraphs} to construct a knot diagram $D$ with $n$ crossings for which the resolution with the most $2$-coloured edges is $\Gamma$. With appropriate choice of crossings, we can ensure that $D$ is a diagram for an unlink $U$. By the work of MOY, if the polynomial satisfies the MOY moves it also satisfies the Reidemeister moves, so the polynomial is a link invariant. Then $(U)_N=(-1)^n(\Gamma)_N+\sum_{i=1}^{2^n-1}(-1)^{k_i}(q)^{\epsilon_i}(\Gamma_i)_N$ where each $\Gamma_i$ has $k_i<n$ edges coloured with $2$, and $\epsilon_i$ is the number of positive crossings resolved into edges coloured $1$ minus the number of negative crossings resolved into edges coloured with $1$. By induction, we can use the MOY moves to calculate the values of $(\Gamma_i)_N$ for each $i$, and the $\mathfrak{sl}(N)$ polynomial for $U$ is $[N]^{d}=\left(\frac{q^N-q^{-N}}{q-q^{-1}}\right)^d$, where $d$ is the number of components of $U$. Hence we can calculate $(\Gamma)_N$ using MOY moves.
\end{proof}

\begin{proof}[Proof of Theorem \ref{uniquely}]
Suppose the largest colouring in the diagram is $m$, $m>2$. The idea is that, following Wu \cite{wu2009colored}, we replace all the edges coloured with $m$ with edges with colourings smaller than $m$.

If the diagram contains any disjoint circles, then remove them with \ref{zero}. If there are any remaining $m$ edges, then locally the diagram is
\[ \Gamma=\begin{tikzpicture}[baseline=-0.65ex,scale=0.5]
\draw (-1,-1) to [out=90,in=270] (0,-0.33) to [out=90,in=270] (0,0.33) to [out=90,in=270] (-1,1);
\draw (1,-1) to [out=90,in=270] (0,-0.33);
\draw (0,0.33) to [out=90,in=270] (1,1);
\draw (-1.25, -0.8) node {$j$};
\draw (-1.25,0.8) node {$l$};
\draw (2, -0.8) node {$m-j$};
\draw (2, 0.8) node {$m-l$};
\draw (0.5,0) node {$m$};
\end{tikzpicture}
\]
with $j,l<m$. Then we have the equality in figure \ref{step1}, using \ref{two} and \ref{three} twice.
\begin{figure}\[
(\Gamma)_N=\frac{1}{[j][l]}\left(\begin{tikzpicture}[baseline=-0.65ex,scale=0.9]
\draw (0,-3) to [out=90,in=270] (0,-2.5) to [out=135,in=270] (-0.5,-2) to [out=90,in=225] (0,-1.5) to [out=90,in=270] (1,-0.5);
\draw (0,-2.5) to [out=45,in=270] (0.5,-2) to [out=90,in=315] (0,-1.5);
\draw (1,0.5) to [out=90,in=270] (0,1.5) to [out=45,in=270] (0.5,2) to [out=90,in=315] (0,2.5) to [out=90,in=270] (0,3);
\draw (0,1.5) to [out=135,in=270] (-0.5,2) to [out=90,in=225] (0,2.5);
\draw (1,-0.5) -- (1,0.5);
\draw (2,-3) to [out=90,in=270] (1,-0.5);
\draw (2,3) to [out=270,in=90] (1,0.5);
\draw (0, -3.25) node {$j$};
\draw (2,-3.25) node {$m-j$};
\draw (0,3.25) node {$l$};
\draw (2,3.25) node {$m-l$};
\draw (1.6,0) node {$m$};
\draw (1,-2) node {$j-1$};
\draw (-0.75,-2) node {$1$};
\draw (0.1,-1) node {$j$};
\draw (-0.75,2) node {$1$};
\draw (1,2) node {$l-1$};
\draw (0.1,1) node {$l$};
\end{tikzpicture}\right)_N
= \frac{1}{[j][l]}\left( \begin{tikzpicture}[baseline=-0.65ex,scale=0.9]
\draw (0,-3) to [out=90,in=270] (0,-2) to [out=90,in=270] (1,-0.5) to [out=90,in=270] (1,0.5) to [out=90,in=270] (0,2) to [out=90,in=270] (0,3);
\draw (2,-3) to [out=90,in=270](2,-2) to [out=90,in=270]  (1,-0.5);
\draw (2,3) to [out=270,in=90] (2,2) to [out=270,in=90] (1,0.5);
\draw (0,2.7) to [out=270, in=180] (0.5,2.35) to [out=0,in=180] (1.5,2.35) to [out=0,in=90] (2,2);
\draw (0,-2.7) to [out=90,in=180] (0.5,-2.35) to [out=0,in=180] (1.5,-2.35) to [out=0,in=270] (2,-2);
\draw (0, -3.25) node {$j$};
\draw (2,-3.25) node {$m-j$};
\draw (0,3.25) node {$l$};
\draw (2,3.25) node {$m-l$};
\draw (1.6,0) node {$m$};
\draw (1,2.6) node {$l-1$};
\draw (1,-2.6) node {$j-1$};
\draw (-0.25,2) node {$1$};
\draw (-0.25,-2) node {$1$};
\draw (2,1.1) node {$m-1$};
\draw (2,-1.1) node {$m-1$};
\end{tikzpicture}\right)_N
\]\caption{}\label{step1}\end{figure}
 But then we have the relation in figure \ref{fig}
\begin{figure}
\begin{eqnarray*}\left( \begin{tikzpicture}[baseline=-0.65ex,scale=0.9]
\draw (0,-3) to [out=90,in=270] (0,-2) to [out=90,in=270] (1,-0.5) to [out=90,in=270] (1,0.5) to [out=90,in=270] (0,2) to [out=90,in=270] (0,3);
\draw (2,-3) to [out=90,in=270](2,-2) to [out=90,in=270]  (1,-0.5);
\draw (2,3) to [out=270,in=90] (2,2) to [out=270,in=90] (1,0.5);
\draw (0,2.7) to [out=270, in=180] (0.5,2.35) to [out=0,in=180] (1.5,2.35) to [out=0,in=90] (2,2);
\draw (0,-2.7) to [out=90,in=180] (0.5,-2.35) to [out=0,in=180] (1.5,-2.35) to [out=0,in=270] (2,-2);
\draw (0, -3.25) node {$j$};
\draw (1.7,-3.25) node {$m-j$};
\draw (0,3.25) node {$l$};
\draw (1.7,3.25) node {$m-l$};
\draw (1.6,0) node {$m$};
\draw (1,2.6) node {$l-1$};
\draw (1,-2.6) node {$j-1$};
\draw (0.4,1.1) node {$1$};
\draw (0.4,-1.1) node {$1$};
\draw (2,1.1) node {$m-1$};
\draw (2,-1.1) node {$m-1$};
\end{tikzpicture}\right)_N =
\left(\begin{tikzpicture}[baseline=-0.65ex,scale=0.9]
\draw (0,-3) -- (0,3);
\draw (2,-3) -- (2,3);
\draw (0,-2.7) to [out=90,in=180] (0.5,-2.35) to [out=0,in=180] (1.5,-2.35) to [out=0,in=270] (2,-2);
\draw (0,2.7) to [out=270,in=180] (0.5,2.35) to [out=0,in=180] (1.5,2.35) to [out=0,in=90] (2,2);
\draw (0,-0.3) to [out=270,in=180] (0.5,-0.75) to [out=0,in=180] (1.5,-0.75) to [out=0,in=90] (2,-1);
\draw (0,0.3) to [out=90,in=180] (0.5,0.75) to [out=0,in=180] (1.5,0.75) to [out=0,in=270] (2,1);
\draw (0, -3.25) node {$j$};
\draw (1.7,-3.25) node {$m-j$};
\draw (0,3.25) node {$l$};
\draw (1.7,3.25) node {$m-l$};
\draw (1,2.6) node {$l-1$};
\draw (1,-2.6) node {$j-1$};
\draw (-0.25,1.5) node {$1$};
\draw (-0.25,-1.5) node {$1$};
\draw (1.45,1.5) node {$m-1$};
\draw (1.45,-1.5) node {$m-1$};
\draw (-0.25,0) node {$2$};
\draw (1.45,0) node {$m-2$};
\draw (1,1) node {$1$};
\draw (1,-1) node {$1$};
\end{tikzpicture}\right)_N 
 -[m-1]
\left(\begin{tikzpicture}
[baseline=-0.65ex,scale=0.9]
\draw (0,-3) -- (0,3);
\draw (2,-3) -- (2,3);
\draw (0,-2.7) to [out=90,in=180] (0.5,-2.35) to [out=0,in=180] (1.5,-2.35) to [out=0,in=270] (2,-2);
\draw (0,2.7) to [out=270,in=180] (0.5,2.35) to [out=0,in=180] (1.5,2.35) to [out=0,in=90] (2,2);
\draw (0, -3.25) node {$j$};
\draw (1.7,-3.25) node {$m-j$};
\draw (0,3.25) node {$l$};
\draw (1.7,3.25) node {$m-l$};
\draw (1,2.6) node {$l-1$};
\draw (1,-2.6) node {$j-1$};
\draw (0.25,0) node {$1$};
\draw (1.5,0) node {$m-1$};
\end{tikzpicture}
\right)_N
\end{eqnarray*}
\caption{}\label{fig}\end{figure}
by expanding the central portion of the first term on the right hand side of figure \ref{fig} using \ref{five}. The right hand side of figure \ref{fig} has no colourings larger than $m-1$, hence we have written $(\Gamma)_N$ in terms of diagrams containing fewer $m$-colourings.

Thus for any coloured diagram $\Gamma$, there is a diagram $\Gamma'$ that is coloured only in $\{1,2\}$ such that $(\Gamma)_N=p_\Gamma(\Gamma')_N$ for some polynomial $p_\Gamma$ determined by the MOY moves. The result then follows from Proposition \ref{uniquelyonetwo}. 
\end{proof}
\begin{remark}
Proposition \ref{uniquelyonetwo} is often used implicitly in the literature, but we have been unable to find a source that gives a proof.
\end{remark}
\begin{remark}
Note that we have not used the full strength of several of the MOY moves as written above, so it is possible to further refine the list of MOY moves that are required to determine the MOY polynomial. More precisely, \ref{one} and \ref{four} appear in Proposition \ref{uniquelyonetwo}, but do not appear in the argument of Theorem \ref{uniquely} otherwise, so are only required in the case $i=j=1$. \ref{two} is only used in the case $j=1$, and \ref{five} is only used in the case $i=k=1$. However, we include the more general cases in the list because they are useful in calculations, and are used to show that the  $\mathfrak{sl}(N)$ coloured framed link invariant is invariant under the Reidemeister moves 2 and 3 in MOY's paper \cite{MOY}.
\end{remark}

\section{Proofs}\label{proofs}
In this section, we give a proof of Theorem \ref{main}. By Theorem \ref{uniquely}, it will suffice to show that $\chi(\M(\Gamma))$ satisfies the MOY moves in Section \ref{moves}, with polynomials evaluated at $1$, thus the proof of Theorem \ref{main} breaks into proofs of Lemmas \ref{move 0}, \ref{move 1}, \ref{move 2}, \ref{move 3}, \ref{move 4} and \ref{move 5}.

It is well-known that the complex Grassmannian has only even-degree homology, for instance because Schubert varieties give a CW-decomposition with only even dimensional cells. Given this fact, we show that its Poincar\'{e} polynomial has the required form for our purposes. This lemma is well-known, but we include it as the proof will use calculations using fibre bundles and the Serre spectral sequence, which are techniques that will be used throughout the rest of this paper.
\begin{lem}\label{grassmannian}
For all $1\leq k\leq n\leq N$,
\[
\pi (\G(k,n))(q)=q^{nk-k^2}\chuse{n}{k}.
\]
where $\pi(\G(k,n))(q)$ is the Poincar\'{e} polynomial of the complex Grassmannian.
\end{lem}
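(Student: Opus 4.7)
The plan is to proceed by induction on $k$ via a Serre spectral sequence computation on a natural incidence variety, of the sort that will recur throughout the paper. Introduce
\[ Y = \{(V, \ell) \in \G(k,n) \times \G(1,n) : \ell \subset V\}, \]
with the two natural projections $p_1 : Y \to \G(k,n)$ and $p_2 : Y \to \G(1,n)$. Both are fibre bundles: the fibre of $p_1$ over $V$ is the projectivisation $\G(1,k)$ of $V$, while the fibre of $p_2$ over $\ell$ consists of those $k$-planes in $\C^n$ containing $\ell$, which is naturally identified with the Grassmannian $\G(k-1, n-1)$ of $(k-1)$-planes in $\C^n/\ell \cong \C^{n-1}$.

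Since $\G(k,n)$ and all projective spaces $\G(1,m)$ have cohomology concentrated in even degrees, the $E_2$-pages of the two associated Serre spectral sequences lie in even total degree, so their differentials, which shift total degree by one, must vanish. Both spectral sequences collapse at $E_2$, and we obtain the multiplicative identities
\[ \pi(Y)(q) \;=\; \pi(\G(k,n))(q)\cdot \pi(\G(1,k))(q) \;=\; \pi(\G(1,n))(q)\cdot \pi(\G(k-1,n-1))(q). \]

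The base case $k=1$ is $\pi(\G(1,n))(q) = 1+q^2+\cdots+q^{2(n-1)} = q^{n-1}[n]$, in agreement with the stated formula since $\chuse{n}{1}=[n]$. For the inductive step, assume the formula for $\G(k-1,n-1)$; substituting it together with $\pi(\G(1,m))(q) = q^{m-1}[m]$ into the identity above gives
\[ \pi(\G(k,n))(q) = \frac{q^{n-1}[n] \cdot q^{(n-1)(k-1)-(k-1)^2}\chuse{n-1}{k-1}}{q^{k-1}[k]}, \]
and a routine collection of powers of $q$ (which sum to $nk-k^2$) together with the identity $\chuse{n}{k} = \frac{[n]}{[k]}\chuse{n-1}{k-1}$ yields the claim. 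The only non-routine point is justifying collapse of the spectral sequences, but this is automatic from the even-degree concentration of the relevant cohomologies; no Euler class or cup product computation is required.
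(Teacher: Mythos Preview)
Your proof is correct and essentially identical in strategy to the paper's: both compute the Poincar\'e polynomial of an incidence variety two ways via the Serre spectral sequence (which collapses by even-degree concentration) to obtain the relation $\pi(\P^{k-1})\,\pi(\G(k,n)) = \pi(\P^{n-1})\,\pi(\G(k-1,n-1))$, and then finish by induction. The only cosmetic difference is the choice of incidence variety: the paper uses the flag variety $\G(k-1,n-1,n)=\{P\subset l^\perp\}$ of a $(k-1)$-plane inside a hyperplane, whereas you use $Y=\{(V,\ell):\ell\subset V\}$; the map $(P,l^\perp)\mapsto(P\oplus l,\,l)$ identifies the two, and the resulting fibre bundles and equation coincide.
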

\begin{proof}
This statement is true whenever $k=1$ because $\G(1,n)=\P^{n-1}$. For induction, assume that it is true for $(k-1,n-1)$. Let $\G(k-1,n-1,n)$ be the flag variety of subspaces $0\subset P \subset P' \subset \C^n$ where $\dim P=k-1$ and $\dim P'=n-1$. We write $P'=l^\perp$ for a unique line $l$ in $\C^n$.

Then we have fibre bundles
\[ \xymatrix{\G(k-1,n-1) \ar@{^{(}->}[r]& \G(k-1,n-1,n) \ar[d]^{\pi_1}\\
& \P^{n-1}} \]
and 
\[ \xymatrix{\P^{k-1} \ar@{^{(}->}[r]& \G(k-1,n-1,n) \ar[d]^{\pi_2}\\
& \G(k,n)} \]
where $\pi_1$ maps $P\subset l^\perp$ to $l$, and $\pi_2$ sends $P\subset l^\perp$ to $P\oplus l$. Both of these maps are well-defined and continuous and it is clear that $\pi_1$ is a fibre bundle. \\

For $\pi_2$, let $U_I=\{P\in \G(k,n)|P\cap \mathrm{span}(e_{i_1},\ldots, e_{i_{n-k}})= \{0\} \}$ for each $n-k$ element subset $I\subset \{1,\ldots n\}$. The set $\pi_2^{-1}(U_I)$ consists of all $(k-1)$ planes $P$ in $(n-1)$-planes $l^\perp$ in $\C^n$ with $P\oplus l\in U_I$, so each $l$ in this preimage is contained in a single $k$-plane. Hence there is a map $\phi:\pi_2^{-1}(U_I)\to U_I\times \P^{k-1}:(P\subset l^\perp )\mapsto (P\oplus l, l)$, which is a homeomorphism making the diagram
\[ \xymatrix{ \pi_2^{-1}(U_I) \ar[r]^{\phi}\ar[d]_{\pi_2} & U_I\times \P^{k-1} \ar[dl]^{\operatorname{proj}_1} \\ U_I} \]
commute, where $\operatorname{proj}_1$ is projection onto first coordinate. Therefore $\pi_2$ is a fibre bundle.

Then by the Serre spectral sequence and the fact that the bases and fibres all have only even-degree homology, we find that the Poincar\'{e} polynomials satisfy
\[ \pi(\P^{k-1})\pi(\G(k,n))=\pi(\P^{n-1})\pi(\G(k-1,n-1)) \]
which implies that
\[ q^{k-1}[k]\pi(\G(k,n))=q^{n-1}[n]q^{(n-1)(k-1)-(k-1)^2}\chuse{n-1}{k-1} \]
and hence $\pi(\G(k,n))=q^{nk-k^2}\frac{[n]}{[k]}\chuse{n-1}{k-1}=q^{nk-k^2}\chuse{n}{k}$ as required.
\end{proof}

Note in particular that $\chi(\G(k,n))=\chuse{n}{k}|_{q=-1}=\chuse{n}{k}|_{q=1}=\binom{n}{k}$.

\subsection{Move 0}
\begin{lem}\label{move 0} For all $0<i\leq N$,
\[ \pi\left(\mathscr{M}\left(\begin{tikzpicture}[baseline=-0.65ex]
\draw (0,0) circle (0.25);
\draw (0.4,0) node {$i$};
\end{tikzpicture}\right) \right)= q^{Ni-i^2}\chuse{N}{i} \]
\end{lem}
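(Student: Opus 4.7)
The plan is essentially immediate once we unpack the definition of $\M(\Gamma)$. A circle labelled $i$ is a trivalent graph with no trivalent vertices, so there are no incidence/orthogonality constraints to impose on the chosen decoration. A colouring of this graph therefore consists of a single unconstrained choice of an element of $\G(i,N)$, and the topology on $\M$ (as a subspace of a product of Grassmannians) restricts to the usual topology on $\G(i,N)$.

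First I would make this identification explicit, setting up a homeomorphism $\M(\text{circle}_i) \cong \G(i,N)$ by sending a colouring to the $i$-plane assigned to the unique edge. Since there are no vertex conditions to verify, this map is clearly a continuous bijection, and its inverse (assigning the $i$-plane back to the edge) is equally continuous. Then I would invoke Lemma \ref{grassmannian} to conclude
\[ \pi(\M(\text{circle}_i))(q) = \pi(\G(i,N))(q) = q^{Ni-i^2}\chuse{N}{i}. \]

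There is no real obstacle here; the only thing to be careful about is to confirm that the topology $\M(\Gamma)$ inherits as a subspace of $\prod_e \G(c(e),N)$ (over the edges $e$ with colours $c(e)$) coincides with the standard topology on $\G(i,N)$ in this degenerate case, which is immediate because the product has only one factor and the defining constraints are vacuous. This lemma serves mainly as a warm-up base case for the inductive structure of the remaining moves, where genuine fibre-bundle arguments (as set up in the proof of Lemma \ref{grassmannian}) will be needed.
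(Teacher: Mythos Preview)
Your proposal is correct and follows essentially the same approach as the paper: identify $\M$ of the $i$-coloured circle with $\G(i,N)$ (since there are no vertex constraints) and then invoke Lemma~\ref{grassmannian}. The paper's proof is simply a terser version of what you have written.
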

\begin{proof}
Since we colour the diagram by choosing an $i$-plane in $\C^N$, it is clear that
\[ \mathscr{M}\left(\begin{tikzpicture}[baseline=-0.65ex]
\draw (0,0) circle (0.25);
\draw (0.4,0) node {$i$};
\end{tikzpicture}\right) \cong \G(i,N)
\]
and the result follows by Lemma \ref{grassmannian}.
\end{proof}

\subsection{Move 1}
\begin{lem}\label{move 1}
For all $0<i,j\leq N$,
\[\chi\left(\mathscr{M}\left(\begin{tikzpicture}[baseline=-0.65ex,scale=0.5]
\draw (0,-1.5) -- (0,1.5);
\draw (0,-0.5) to [out=270, in=270] (1,-0.2) to [out=90, in=270] (1,0.2) to [out=90, in=90] (0,0.5);
\draw (-0.25, -1) node {$i$};
\draw (-0.25, 1) node {$i$};
\draw (-1, 0) node {$j+i$};
\draw (1.25,0) node {$j$};
\end{tikzpicture}\right)\right) = \binom{N-i}{j}\chi\left(\mathscr{M}\left(\begin{tikzpicture}[baseline=-0.65ex,scale=0.5]
\draw (0,-1.5) -- (0,1.5);
\draw (-0.25,0) node {$i$};
\draw (0.25,0) node {};
\end{tikzpicture}\right)\right)\]
\end{lem}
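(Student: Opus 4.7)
The plan is to exhibit a forgetful map $\pi \colon \M(\Gamma) \to \M(\Gamma_0)$, where $\Gamma$ and $\Gamma_0$ are (closed) graphs obtained by embedding the left-hand and right-hand diagrams of the move into a fixed ambient graph, and to show that $\pi$ is a fibre bundle with fibre $\G(j, N-i)$. Multiplicativity of Euler characteristic then yields
\[ \chi(\M(\Gamma)) = \chi(\G(j, N-i)) \cdot \chi(\M(\Gamma_0)) = \binom{N-i}{j}\, \chi(\M(\Gamma_0)), \]
using $\chi(\G(j, N-i)) = \binom{N-i}{j}$ as noted after Lemma \ref{grassmannian}.

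To identify the fibres, I would label the planes on the four edges of the bubble region as $P$, $Q$, $R$, $P'$ for the lower $i$-edge, the central $(i+j)$-edge, the bubble $j$-edge, and the upper $i$-edge respectively. The two trivalent vertex conditions read $P \oplus R = Q$ with $P \perp R$, and $P' \oplus R = Q$ with $P' \perp R$, which together force $P = P' = Q \cap R^\perp$ and $Q = P \oplus R$. Hence a colouring of the bubble region is determined uniquely by the pair $(P, R)$ with $R \in \G(j, N)$ orthogonal to $P$. The map $\pi$ sends a colouring of $\Gamma$ to the colouring of $\Gamma_0$ in which the collapsed $i$-edge is assigned $P$ and all remaining edges inherit their colours; its fibre over any point is $\{R \subset P^\perp : \dim R = j\} \cong \G(j, N-i)$.

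For local triviality I would invoke the tautological bundle picture: the orthogonal complement of the tautological $i$-plane bundle over $\G(i, N)$ is a rank-$(N-i)$ complex vector bundle, whose associated $j$-Grassmannian bundle is locally trivial with fibre $\G(j, N-i)$. Pulling back along the continuous map $\M(\Gamma_0) \to \G(i, N)$ that records the plane on the collapsed $i$-edge equips $\pi$ with the required fibre bundle structure, and multiplicativity of Euler characteristic closes the argument. The main obstacle I anticipate is purely bookkeeping: one must check that no other edge of $\Gamma$ couples to the bubble data $R$ or $Q$, so that varying $R$ over the fibre genuinely respects every vertex condition of $\Gamma$. This decoupling is immediate since the only vertex conditions involving $R$ or $Q$ are at the two bubble vertices, and both have been incorporated in the analysis above.
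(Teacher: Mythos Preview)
Your proof is correct and follows essentially the same approach as the paper: both identify $\M(\Gamma)$ as a $\G(j,N-i)$-bundle over $\M(\Gamma_0)$ via the forgetful map and conclude by multiplicativity of Euler characteristic (the paper phrases this as an application of the Serre spectral sequence). Your treatment is in fact slightly more detailed, since you spell out why $P=P'$ and justify local triviality by pulling back the Grassmannian bundle associated to the orthogonal complement of the tautological bundle, whereas the paper simply asserts the bundle structure.
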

\begin{proof}
The space $\mathscr{M}\left(\begin{tikzpicture}[baseline=-0.65ex,scale=0.5]
\draw (0,-1.5) -- (0,1.5);
\draw (0,-0.5) to [out=270, in=270] (1,-0.2) to [out=90, in=270] (1,0.2) to [out=90, in=90] (0,0.5);
\draw (-0.25, -1) node {$i$};
\draw (-0.25, 1) node {$i$};
\draw (-1, 0) node {$j+i$};
\draw (1.25,0) node {$j$};
\end{tikzpicture}\right)$ is a $\G(j,N-i)$-bundle over $\mathscr{M}\left(\begin{tikzpicture}[baseline=-0.65ex,scale=0.5]
\draw (0,-1.5) -- (0,1.5);
\draw (-0.25,0) node {$i$};
\draw (0.25,0) node {};
\end{tikzpicture}\right)$ because any permissible colouring of the left-hand diagram has the $j$-plane in the orthogonal complement of the $i$-plane, and we can choose this $j$-plane satisfying this condition arbitrarily. Therefore we have
\[\chi\left(\mathscr{M}\left(\begin{tikzpicture}[baseline=-0.65ex,scale=0.5]
\draw (0,-1.5) -- (0,1.5);
\draw (0,-0.5) to [out=270, in=270] (1,-0.2) to [out=90, in=270] (1,0.2) to [out=90, in=90] (0,0.5);
\draw (-0.25, -1) node {$i$};
\draw (-0.25, 1) node {$i$};
\draw (-1, 0) node {$j+i$};
\draw (1.25,0) node {$j$};
\end{tikzpicture}\right)\right) = \chi(\G(j,N-i)) \cdot
\chi\left(\mathscr{M}\left(\begin{tikzpicture}[baseline=-0.65ex,scale=0.5]
\draw (0,-1.5) -- (0,1.5);
\draw (-0.25,0) node {$i$};
\draw (0.25,0) node {};
\end{tikzpicture}\right) \right)\]
by the Serre spectral sequence, which gives the result.

\end{proof}
\begin{remark}
In their paper \cite{LZ}, Lobb and Zentner conjectured that $\M(\Gamma)$ has only even-degree homology, and if this holds then in fact 
\[ \pi\left(\mathscr{M}\left(\begin{tikzpicture}[baseline=-0.65ex,scale=0.5]
\draw (0,-1.5) -- (0,1.5);
\draw (0,-0.5) to [out=270, in=270] (1,-0.2) to [out=90, in=270] (1,0.2) to [out=90, in=90] (0,0.5);
\draw (-0.25, -1) node {$i$};
\draw (-0.25, 1) node {$i$};
\draw (-1, 0) node {$j+i$};
\draw (1.25,0) node {$j$};
\end{tikzpicture}\right) \right) = \pi(\G(j,N-i)) \cdot
\pi\left(\mathscr{M}\left(\begin{tikzpicture}[baseline=-0.65ex,scale=0.5]
\draw (0,-1.5) -- (0,1.5);
\draw (-0.25,0) node {$i$};
\draw (0.25,0) node {};
\end{tikzpicture}\right) \right)=\chuse{N-i}{j}\pi\left(\mathscr{M}\left(\begin{tikzpicture}[baseline=-0.65ex,scale=0.5]
\draw (0,-1.5) -- (0,1.5);
\draw (-0.25,0) node {$i$};
\draw (0.25,0) node {};
\end{tikzpicture}\right) \right)\]
by the Serre spectral sequence. A similar statement would hold in the case of Lemma \ref{move 2} also.
\end{remark}

\subsection{Move 2}
\begin{lem}\label{move 2}
For all $0<j\leq i\leq N$,
\[ \chi\left(\M \left(\begin{tikzpicture}[baseline=-0.65ex,scale=0.6]
\draw (0.5,-1.5) -- (0.5,-0.6);
\draw (0.5, -0.6) to [out=90, in=270] (0,0) to [out=90,in=270] (0.5,0.6);
\draw (0.5, -0.6) to [out=90, in=270] (1,0) to [out=90,in=270] (0.5,0.6);
\draw (0.5,0.6) -- (0.5,1.5);
\draw (0.75, -1) node {$i$};
\draw (0.75, 1.1) node {$i$};
\draw (-0.7, 0) node {$j-i$};
\draw (1.25, 0) node {$j$};
\end{tikzpicture}\right) \right)= \binom{i}{j}\chi\left(\M
\left(\begin{tikzpicture}[baseline=-0.65ex,scale=0.6]
\draw (0.5,-1.5) -- (0.5, 1.5);
\draw (0.75, 0) node {$i$};
\draw (0.25, 0) node {};
\end{tikzpicture}\right) \right). \]
\end{lem}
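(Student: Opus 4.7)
The plan is to argue, as in the proof of Lemma \ref{move 1}, that the moduli space on the left is a Grassmannian bundle over $\M\bigl(\text{single }i\text{-edge}\bigr) = \G(i,N)$, and then apply the Serre spectral sequence.

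First I would unpack what a permissible colouring of the left-hand diagram consists of: at the bottom trivalent vertex the $i$-plane $A_{\mathrm{bot}}$ splits as an orthogonal direct sum $A_{\mathrm{bot}} = A_1 \oplus A_2$ with $A_1 \in \G(i-j,N)$ and $A_2 \in \G(j,N)$; at the top trivalent vertex the same planes $A_1$ and $A_2$ span the top $i$-plane $A_{\mathrm{top}}$, again orthogonally. The key small observation is that $A_{\mathrm{top}} = A_1 \oplus A_2 = A_{\mathrm{bot}}$, so the two $i$-planes are forced to coincide. Call this common $i$-plane $A$. Thus a colouring is equivalent to the data of $A \in \G(i,N)$ together with a choice of $j$-dimensional subspace $A_2 \subset A$ (from which $A_1 = A_2^\perp \cap A$ is determined).

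Next I would package this as a fibre bundle
\[
\G(j,i) \hookrightarrow \M\!\left(\begin{tikzpicture}[baseline=-0.65ex,scale=0.5]
\draw (0.5,-1.5) -- (0.5,-0.6);
\draw (0.5,-0.6) to [out=90,in=270] (0,0) to [out=90,in=270] (0.5,0.6);
\draw (0.5,-0.6) to [out=90,in=270] (1,0) to [out=90,in=270] (0.5,0.6);
\draw (0.5,0.6) -- (0.5,1.5);
\end{tikzpicture}\right) \longrightarrow \M\!\left(\begin{tikzpicture}[baseline=-0.65ex,scale=0.5]
\draw (0.5,-1.5) -- (0.5,1.5);
\end{tikzpicture}\right) = \G(i,N),
\]
where the projection sends a colouring to the common $i$-plane $A$, and the fibre over $A$ is the space of $j$-planes in $A$. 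Local triviality is entirely analogous to Lemma \ref{move 1}: over the standard affine chart of $\G(i,N)$, one obtains a continuous local trivialisation by identifying the fibre with $\G(j,i)$ via any chosen frame for the varying $i$-plane.

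Finally, because both the base $\G(i,N)$ and the fibre $\G(j,i)$ have only even-degree cohomology (by Lemma \ref{grassmannian}), the Serre spectral sequence of this fibre bundle collapses at $E_2$, giving multiplicativity of Euler characteristics:
\[
\chi\!\left(\M\!\left(\begin{tikzpicture}[baseline=-0.65ex,scale=0.5]
\draw (0.5,-1.5) -- (0.5,-0.6);
\draw (0.5,-0.6) to [out=90,in=270] (0,0) to [out=90,in=270] (0.5,0.6);
\draw (0.5,-0.6) to [out=90,in=270] (1,0) to [out=90,in=270] (0.5,0.6);
\draw (0.5,0.6) -- (0.5,1.5);
\end{tikzpicture}\right)\right)
= \chi(\G(j,i)) \cdot \chi\!\left(\M\!\left(\begin{tikzpicture}[baseline=-0.65ex,scale=0.5]
\draw (0.5,-1.5) -- (0.5,1.5);
\end{tikzpicture}\right)\right)
= \binom{i}{j}\,\chi\!\left(\M\!\left(\begin{tikzpicture}[baseline=-0.65ex,scale=0.5]
\draw (0.5,-1.5) -- (0.5,1.5);
\end{tikzpicture}\right)\right),
\]
using the evaluation $\chi(\G(j,i)) = \binom{i}{j}$ noted after Lemma \ref{grassmannian}. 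The only step that requires any thought is the forcing $A_{\mathrm{top}} = A_{\mathrm{bot}}$; once this is noted, the proof is essentially identical in structure to Lemma \ref{move 1}, and no genuine obstacle arises.
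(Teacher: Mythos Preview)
Your approach is essentially the paper's: exhibit the left-hand moduli space as a $\G(j,i)$-bundle over the right-hand one and use multiplicativity of Euler characteristic. However, you have misread the setup in one place. These diagrams are \emph{local} pictures inside an ambient graph $\Gamma$; the base of the bundle is not $\G(i,N)$ but rather $\M(\Gamma')$, where $\Gamma'$ is $\Gamma$ with the bubble replaced by a single $i$-edge. The paper's projection ``forgets the bifurcation'' while keeping the colouring of the rest of $\Gamma$ intact, and the fibre over a given colouring is the space of $j$-planes inside the $i$-plane decorating that edge.

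This matters for your justification of the last step. You argue that the Serre spectral sequence collapses because both base and fibre have only even-degree cohomology, but for the actual base $\M(\Gamma')$ this is only the Lobb--Zentner conjecture, not a known fact (cf.\ the Remark following Lemma~\ref{move 1}). Fortunately the conclusion survives: multiplicativity of Euler characteristic in a fibre bundle holds regardless of collapse, since the alternating sum of ranks is constant across the pages of the Serre spectral sequence. So your argument is repaired simply by dropping the claim about even-degree cohomology of the base and invoking this weaker fact, exactly as the paper does.
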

\begin{proof}
Clearly $\M \left(\begin{tikzpicture}[baseline=-0.65ex,scale=0.5]
\draw (0.5,-1.5) -- (0.5,-0.6);
\draw (0.5, -0.6) to [out=90, in=270] (0,0) to [out=90,in=270] (0.5,0.6);
\draw (0.5, -0.6) to [out=90, in=270] (1,0) to [out=90,in=270] (0.5,0.6);
\draw (0.5,0.6) -- (0.5,1.5);
\draw (1, -1) node {$i$};
\draw (1, 1.1) node {$i$};
\draw (-1, 0) node {$j-i$};
\draw (1.5, 0) node {$j$};
\end{tikzpicture}\right)$ is a $\G(j,i)$-bundle over $\M
\left(\begin{tikzpicture}[baseline=-0.65ex,scale=0.5]
\draw (0.5,-1.5) -- (0.5, 1.5);
\draw (1, 0) node {$i$};
\draw (0.25, 0) node {};
\end{tikzpicture}\right) $ because the projection map sends a colouring of the left-hand side to the same colouring of the right-hand side with the bifurcation forgotten, and any choice of $j$-plane out of the $i$-plane will induce the same colouring on $\M
\left(\begin{tikzpicture}[baseline=-0.65ex,scale=0.5]
\draw (0.5,-1.5) -- (0.5, 1.5);
\draw (1, 0) node {$i$};
\draw (0.25, 0) node {};
\end{tikzpicture}\right) $.
\end{proof}

\subsection{Move 3}
\begin{lem}\label{move 3} For all $i,j,k$, we have a homeomorphism as follows:
\[ \M\left(\begin{tikzpicture}[baseline=-0.65ex,scale=0.6]
\draw (0,-1.5) -- (0,-0.5);
\draw (0,-0.5) to [out=90,in=270] (-1,0.5) to [out=90,in=270] (-2,1.5);
\draw (-1,0.5) to [out=90, in=270] (0,1.5);
\draw (0,-0.5) to [out=90,in=270] (1,0.5) to [out=90,in=270] (1,1.5);
\draw (1.2,-1) node {$i+j+k$};
\draw (1.25, 1.25) node {$k$};
\draw (-1.4,0) node {$i+j$};
\draw (-2.25, 1.25) node {$i$};
\draw (-0.5,1.25) node {$j$};
\end{tikzpicture}\right)\cong
\M\left(\begin{tikzpicture}[baseline=-0.65ex,scale=0.6]
\draw (0,-1.5) -- (0,-0.5);
\draw (0,-0.5) to [out=90,in=270] (-1,0.5) to [out=90,in=270] (-1,1.5);
\draw (1,0.5) to [out=90, in=270] (2,1.5);
\draw (0,-0.5) to [out=90,in=270] (1,0.5) to [out=90,in=270] (0,1.5);
\draw (1.2,-1.25) node {$i+j+k$};
\draw (2.25, 1.25) node {$k$};
\draw (1.5,0) node {$j+k$};
\draw (-1.25, 1.25) node {$i$};
\draw (0.5,1.25) node {$j$};
\end{tikzpicture}\right) \]
\end{lem}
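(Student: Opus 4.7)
The plan is to show that both moduli spaces are canonically homeomorphic to the same auxiliary space, namely the space of triples $(I,J,K)$ of pairwise orthogonal subspaces of $\mathbb{C}^N$ of dimensions $i$, $j$, $k$ respectively. The homeomorphism of Move 3 will then be the composition of one identification with the inverse of the other.

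First I would unpack both moduli spaces. A point of the left-hand side $\M(\Gamma_L)$ consists of an $(i+j+k)$-plane $W$ together with a decomposition $W = U \oplus^\perp K$ with $\dim U = i+j$, $\dim K = k$, and a further orthogonal decomposition $U = I \oplus^\perp J$ with $\dim I = i$, $\dim J = j$. A point of the right-hand side $\M(\Gamma_R)$ consists of an $(i+j+k)$-plane $W$ with decomposition $W = I \oplus^\perp V$ where $\dim I = i$, $\dim V = j+k$, together with a further orthogonal decomposition $V = J \oplus^\perp K$. In each case the orthogonality conditions forced at the two trivalent vertices combine to give pairwise orthogonality of $I$, $J$, $K$, and the datum of the intermediate plane $U$ (resp.\ $V$) is determined by $I + J$ (resp.\ $J + K$), while $W = I + J + K$.

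Thus I would define
\[ \Psi_L : \M(\Gamma_L) \longrightarrow \mathcal{T}, \quad (W, U, K, I, J) \longmapsto (I, J, K), \]
\[ \Psi_R : \M(\Gamma_R) \longrightarrow \mathcal{T}, \quad (W, V, I, J, K) \longmapsto (I, J, K), \]
where $\mathcal{T}$ is the space of pairwise orthogonal triples of subspaces of $\mathbb{C}^N$ with the prescribed dimensions (topologised as a subspace of $\G(i,N) \times \G(j,N) \times \G(k,N)$). Both maps are clearly continuous. The inverse of $\Psi_L$ sends $(I,J,K)$ to $(I+J+K,\, I+J,\, K,\, I,\, J)$, and similarly for $\Psi_R$ with $V = J+K$; continuity of these inverses follows from the fact that the operation of taking the sum of two orthogonal subspaces is continuous as a map between Grassmannians. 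Composing, the required homeomorphism $\M(\Gamma_L) \cong \M(\Gamma_R)$ is $\Psi_R^{-1} \circ \Psi_L$, i.e.\ the map $(W, U, K, I, J) \mapsto (W, I+J+K, I, J+K, J, K)$.

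There is essentially no obstacle here: the content of Move 3 at the level of moduli is just the associativity of orthogonal direct sum, and the entire proof is the observation that both sides of the move record the same unordered pairwise-orthogonal splitting of an $(i+j+k)$-plane into pieces of dimensions $i$, $j$, $k$. The only thing worth being careful about is verifying that $\Psi_L$ and $\Psi_R$ are genuinely homeomorphisms (not merely continuous bijections), which I would handle by exhibiting the explicit continuous inverses above rather than invoking any compactness argument.
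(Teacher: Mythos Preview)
Your proposal is correct and takes essentially the same approach as the paper: both sides are identified with the space of pairwise orthogonal triples $(I,J,K)$ of the prescribed dimensions. The paper dispatches this in a single sentence (``A permissible colouring of the left-hand diagram is given by three mutually orthogonal planes of dimensions $i$, $j$ and $k$, which is exactly the same as the right-hand side''), whereas you spell out the explicit maps $\Psi_L$, $\Psi_R$ and their continuous inverses; this extra care is fine but not strictly necessary here.
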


\begin{proof}
A permissible colouring of the left-hand diagram is given by three mutually orthogonal planes of dimensions $i$, $j$ and $k$, which is exactly the same as the right-hand side, so the moduli spaces are equal.
\end{proof}

\subsection{Move 4}
\begin{lem}\label{move 4}For all $i\geq 1$, we have the following:
\begin{eqnarray*} \chi\left(\M\left(\begin{tikzpicture}[baseline=-0.65ex]
\draw (0,-1) to [out=90,in=180] (0.5,-0.5) to [out=0,in=180] (1,-0.5) to [out=0, in=90] (1.5,-1);
\draw (0.5,-0.5) to [out=180, in=270] (0,0) to [out=90,in=180] (0.5,0.5);
\draw (1,-0.5) to [out=0, in=270] (1.5,0) to [out=90,in=0] (1,0.5);
\draw (0,1) to [out=270,in=180] (0.5,0.5) to [out=0,in=180] (1,0.5) to [out=0, in=270] (1.5,1);
\draw (-0.25, -0.75) node {$1$};
\draw (1.75, -0.75) node {$i$};
\draw (-0.25, 0) node {$i$};
\draw (-0.25, 0.75) node {$1$};
\draw (1.75, 0.75) node {$i$};
\draw (1.75, 0) node {$1$};
\draw (0.75,-0.75) node {$i+1$};
\draw (0.75,0.75) node {$i+1$};
\end{tikzpicture}\right) \right)
&=& \chi \left(\M \left(\begin{tikzpicture}[baseline=-0.65ex]
\draw (0,-1) -- (0,1);
\draw (1,-1) -- (1,1);
\draw (-0.25,0) node {$1$};
\draw (1.25, 0) node {$i$};
\end{tikzpicture}\right) \right)
\\
&& +
(N-i-1)\chi\left(\M
\left(\begin{tikzpicture}[baseline=-0.65ex]
\draw (0,-1) to [out=90, in=180] (0.5,-0.5) to [out=0,in=180] (1,-0.5) to [out=0,in=90] (1.5,-1);
\draw (0,1) to [out=270, in=180] (0.5,0.5) to [out=0,in=180] (1,0.5) to [out=0,in=270] (1.5,1);
\draw (0.75,-0.5) to [out=180, in=270] (0.25,0) to [out=90,in=180] (0.75,0.5);
\draw (-0.25, -0.75) node {$1$};
\draw (1.75, -0.75) node {$i$};
\draw (1, 0) node {$i-1$};
\draw (-0.25, 0.75) node {$1$};
\draw (1.75, 0.75) node {$i$};
\end{tikzpicture}\right) \right)
 \end{eqnarray*}
\end{lem}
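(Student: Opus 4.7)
The plan is to stratify $\M(\Gamma_L)$, where $\Gamma_L$ denotes the left-hand diagram of Move~4, according to the dimension of the intersection of its two internal $(i+1)$-planes. A colouring of $\Gamma_L$ is encoded by the tuple $(M_b, M_t, L, R)$ with $M_b, M_t \in \G(i+1, N)$ the bottom and top horizontal internal edges, $L \in \G(i, N)$ the left vertical $i$-plane, and $R \in \G(1, N)$ the right vertical line; the four external planes are then recovered as orthogonal complements inside $M_b$ or $M_t$ (for instance $A = L^\perp_{M_b}$, $B' = R^\perp_{M_t}$). The four vertex conditions reduce exactly to the containments $L, R \subset M_b \cap M_t$, so since $\dim L = i$ and $\dim M_b = \dim M_t = i+1$, the intersection has dimension either $i$ or $i+1$, giving exactly two strata.

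I would next compute the Euler characteristic of each stratum by exhibiting it as an iterated fibre bundle, as in the earlier moves. On the closed stratum $M_b = M_t =: M$, the moduli is parametrised freely by $M \in \G(i+1, N)$ together with independent choices of $L \in \G(i, M) \cong \P^i$ and $R \in \G(1, M) \cong \P^i$, so by multiplicativity of Euler characteristic under fibre bundles (cf.\ Lemma \ref{grassmannian}) the contribution is $\binom{N}{i+1}(i+1)^2$. On the open stratum $M_b \neq M_t$, necessarily $M_b \cap M_t = L$ and so $R \subset L$; the data is then $L \in \G(i, N)$, an ordered pair of distinct points in $\P(\C^N/L) \cong \P^{N-i-1}$ (Euler characteristic $(N-i)(N-i-1)$), and a line $R \in \P(L) \cong \P^{i-1}$, giving contribution $\binom{N}{i}\,i\,(N-i)(N-i-1)$. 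Additivity of Euler characteristic over the locally closed stratification yields $\chi(\M(\Gamma_L))$ as the sum.

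For the right-hand side, the trivial identity diagram has moduli $\G(1, N) \times \G(i, N)$ with Euler characteristic $N\binom{N}{i}$, while the middle ``digon'' diagram is parametrised by the internal $(i-1)$-plane $C$ together with two independent lines $A, A' \subset C^\perp$ (the external $i$-planes being recovered as $A \oplus C$ and $A' \oplus C$), giving a $\P^{N-i} \times \P^{N-i}$-bundle over $\G(i-1, N)$ of Euler characteristic $\binom{N}{i-1}(N-i+1)^2$. The elementary identities $(i+1)\binom{N}{i+1} = (N-i)\binom{N}{i}$ and $(N-i+1)\binom{N}{i-1} = i\binom{N}{i}$ reduce both sides of the claim to $\binom{N}{i}(N-i)\bigl[1 + i(N-i)\bigr]$, completing the verification. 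The main obstacle is that the forgetful map $\M(\Gamma_L) \to \G(i+1, N)^2$ recording $(M_b, M_t)$ is not a fibre bundle---its fibre jumps in dimension on the diagonal, because $L$ is pinned down as $M_b \cap M_t$ only off the diagonal---so a single application of the Serre spectral sequence cannot produce the answer and the stratification above is essential.
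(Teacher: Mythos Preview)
Your computation is internally correct, but it does not prove the lemma as stated.  The MOY relations are \emph{local} moves: the pictured tangles sit inside an arbitrary ambient graph, and Lemma~\ref{move 4} asserts the Euler-characteristic identity for the moduli spaces of the three \emph{global} graphs obtained by the three local replacements.  That is exactly what is needed in Section~\ref{proofs} to invoke Theorem~\ref{uniquely}.  You instead treat each tangle as a stand-alone object with free boundary; for example, declaring that the two parallel strands have moduli $\G(1,N)\times\G(i,N)$ presumes nothing outside constrains $\alpha$ or $A_i$.  The numbers you obtain are therefore not $\chi(\M(\Gamma))$, $\chi(\M(\Gamma_1))$, $\chi(\M(\Gamma_2))$ for any graph in the theory, and a free-boundary identity does not in general imply the fibre-wise identity over an arbitrary exterior.  (A toy counterexample: over a two-point boundary $\partial=\{0,1\}$, let the three local moduli be $\{(0,0),(1,1),(1,2)\}$, $\{(0,0),(0,1)\}$, and $\{(1,0)\}$; then $3=2+1$ holds globally but fails over the fibre at $0$.)

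The paper's argument avoids this by working relative to the exterior throughout.  It records only the boundary colourings $(\alpha,\beta,A_i)$ via an algebraic map to $\P^{N-1}\times\P^{N-1}\times\G(i,N)$, stratifies the \emph{target} into the three loci (i)~$\alpha\subset A_i,\ \alpha=\beta$; (ii)~$\alpha\subset A_i,\ \alpha\neq\beta$; (iii)~$\alpha\perp A_i$, and shows that over each stratum the fibres of $\M(\Gamma)$, $\M(\Gamma_1)$, $\M(\Gamma_2)$ already satisfy the required linear relation (with Hardt triviality to turn this into an honest additivity statement).  Your stratification by $\dim(M_b\cap M_t)$ is natural, but the strata do not line up cleanly with $\M(\Gamma_1)$ and $\M(\Gamma_2)$ once the exterior is present: for instance, on your closed stratum $M_b=M_t=M$ one has $\alpha=\beta$ and $A_i=B_i$, matching $\Gamma_2$, yet the residual freedom in $M$ depends on whether $\alpha\subset A_i$ or $\alpha\perp A_i$, so the fibre over a point of $\M(\Gamma_2)$ is not constant.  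To repair your approach you would need to refine the stratification by boundary type and argue stratum-wise---which is essentially what the paper does.
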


\begin{proof}
Let $\Gamma$ denote the diagram in the left-hand side, and $\Gamma_1$ and $\Gamma_2$ denote the two diagrams in the right-hand side respectively. Consider the decorations in figure \ref{fig4}
\begin{figure}[h]\[ \begin{tikzpicture}[baseline=-0.65ex,scale=1.5]
\draw (0,-1) to [out=90,in=180] (0.5,-0.5) to [out=0,in=180] (1,-0.5) to [out=0, in=90] (1.5,-1);
\draw (0.5,-0.5) to [out=180, in=270] (0,0) to [out=90,in=180] (0.5,0.5);
\draw (1,-0.5) to [out=0, in=270] (1.5,0) to [out=90,in=0] (1,0.5);
\draw (0,1) to [out=270,in=180] (0.5,0.5) to [out=0,in=180] (1,0.5) to [out=0, in=270] (1.5,1);
\draw (-0.125, -0.75) node {$\beta$};
\draw (-0.125, 0.75) node {$\alpha$};
\draw (1.625, 0.75) node {$A_i$};
\draw (1.625,0) node {$\gamma$};
\end{tikzpicture} \] \caption{}\label{fig4}\end{figure}
where $\alpha,\beta,\gamma\in \P^{N-1}$ and $A_i\in \G(i,N)$. There are three possibilities:
\begin{enumerate}[(i)]
\item $\alpha\subset A_i$ and $\alpha=\beta$
\item $\alpha\subset A_i$ and $\alpha\neq\beta$
\item $\alpha\perp A_i$.
\end{enumerate}
To colour $\Gamma$ in case (i), we can choose $\gamma$ freely as a line in the orthogonal complement to $A_i$, and once we have made this choice the rest of the colouring is determined. If we write $A_i=\alpha\oplus A_{i-1}$, then we see that there is a unique decoration of each of $\Gamma_1$ and $\Gamma_2$, with decorations given as follows:
\[ \begin{tikzpicture}[baseline=-0.65ex]
\draw (0,-1) to [out=90, in=180] (0.5,-0.5) to [out=0,in=180] (1,-0.5) to [out=0,in=90] (1.5,-1);
\draw (0,1) to [out=270, in=180] (0.5,0.5) to [out=0,in=180] (1,0.5) to [out=0,in=270] (1.5,1);
\draw (0.75,-0.5) to [out=180, in=270] (0.25,0) to [out=90,in=180] (0.75,0.5);
\draw (-0.25, -0.75) node {$\alpha$};
\draw (1.75, -0.75) node {$A_i$};
\draw (1, 0) node {$A_{i-1}$};
\draw (-0.25, 0.75) node {$\alpha$};
\draw (1.75, 0.75) node {$A_i$};
\end{tikzpicture}\quad \mathrm{and} \quad \begin{tikzpicture}[baseline=-0.65ex]
\draw (0,-1) -- (0,1);
\draw (1,-1) -- (1,1);
\draw (-0.25,0) node {$\alpha$};
\draw (1.25, 0) node {$A_i$};
\end{tikzpicture}  \]
In case (ii), we must choose $\gamma$ to be a line orthogonal to both $A_i$ and $\beta$. There is no colouring of $\Gamma_2$, and a unique colouring of $\Gamma_1$. Letting $A_i=A_{i-1}\oplus\alpha$, the colourings are as in figure \ref{fig5}.

\begin{figure}[h]\[ \begin{tikzpicture}[baseline=-0.65ex,scale=2]
\draw (0,-1) to [out=90,in=180] (0.5,-0.5) to [out=0,in=180] (1,-0.5) to [out=0, in=90] (1.5,-1);
\draw (0.5,-0.5) to [out=180, in=270] (0,0) to [out=90,in=180] (0.5,0.5);
\draw (1,-0.5) to [out=0, in=270] (1.5,0) to [out=90,in=0] (1,0.5);
\draw (0,1) to [out=270,in=180] (0.5,0.5) to [out=0,in=180] (1,0.5) to [out=0, in=270] (1.5,1);
\draw (-0.25, -0.75) node {$\beta$};
\draw (1.9, -0.75) node {$A_{i-1}\oplus\beta$};
\draw (-0.4, 0) node {$A_{i-1}\oplus\gamma$};
\draw (-0.25, 0.75) node {$\alpha$};
\draw (1.75, 0.75) node {$A_i$};
\draw (1.66, 0) node {$\gamma$};
\draw (0.75,0.66) node {$A_i\oplus\gamma$};
\draw (0.75,-0.66) node {$A_{i-1}\oplus\beta\oplus\gamma$};
\end{tikzpicture} \quad \mathrm{and} \quad
\begin{tikzpicture}[baseline=-0.65ex]
\draw (0,-1) to [out=90, in=180] (0.5,-0.5) to [out=0,in=180] (1,-0.5) to [out=0,in=90] (1.5,-1);
\draw (0,1) to [out=270, in=180] (0.5,0.5) to [out=0,in=180] (1,0.5) to [out=0,in=270] (1.5,1);
\draw (0.75,-0.5) to [out=180, in=270] (0.25,0) to [out=90,in=180] (0.75,0.5);
\draw (-0.25, -0.75) node {$\beta$};
\draw (2.25, -0.75) node {$A_{i-1}\oplus \beta$};
\draw (1, 0) node {$A_{i-1}$};
\draw (-0.25, 0.75) node {$\alpha$};
\draw (1.75, 0.75) node {$A_{i}$};
\end{tikzpicture}
\]\caption{}\label{fig5}\end{figure}

In case (iii), the colourings on $\Gamma$ and $\Gamma_2$ are uniquely determined by the data $\alpha,A_i$, with colourings as in figure \ref{fig6}.

\begin{figure}[h]\[ \begin{tikzpicture}[baseline=-0.65ex,scale=2]
\draw (0,-1) to [out=90,in=180] (0.5,-0.5) to [out=0,in=180] (1,-0.5) to [out=0, in=90] (1.5,-1);
\draw (0.5,-0.5) to [out=180, in=270] (0,0) to [out=90,in=180] (0.5,0.5);
\draw (1,-0.5) to [out=0, in=270] (1.5,0) to [out=90,in=0] (1,0.5);
\draw (0,1) to [out=270,in=180] (0.5,0.5) to [out=0,in=180] (1,0.5) to [out=0, in=270] (1.5,1);
\draw (-0.25, -0.75) node {$\alpha$};
\draw (1.66, -0.75) node {$A_{i}$};
\draw (-0.25, 0) node {$A_{i}$};
\draw (-0.25, 0.75) node {$\alpha$};
\draw (1.66, 0.75) node {$A_i$};
\draw (1.66, 0) node {$\alpha$};
\draw (0.75,0.66) node {$A_i\oplus\alpha$};
\draw (0.75,-0.66) node {$A_{i}\oplus\alpha$};
\end{tikzpicture}
\quad \mathrm{and} \quad
\begin{tikzpicture}[baseline=-0.65ex]
\draw (0,-1) -- (0,1);
\draw (1,-1) -- (1,1);
\draw (-0.25,0) node {$\alpha$};
\draw (1.25, 0) node {$A_i$};
\end{tikzpicture} \]\caption{}\label{fig6}\end{figure}

Treating $\M(\Gamma)$ as a real projective algebraic variety, evaluation at $\alpha$, $\beta$ and $A_i$ gives an algebraic map from each of $\M(\Gamma)$,$\M(\Gamma_1)$ and $\M(\Gamma_2)$ to $\P^{N-1}\times\P^{N-1}\times \G(i,N)$. Let $V$, $V_1$, $V_2$ be the respective preimages of the subvariety $\Delta$ formed by the condition that $\alpha\subset A_i$ and $\alpha=\beta$. Then by Hardt triviality there exists an open set $U\subset \P^{N-1}\times \P^{N-1}\times \G(i,N)$ containing $\Delta$ such that the respective preimages $\tilde{V}$, $\tilde{V_1}$ and $\tilde{V_2}$ of $U$ are homotopy equivalent to $V$, $V_1$ and $V_2$ respectively.

Since case (i) implies that colourings are uniquely determined for each of $\Gamma_1$ and $\Gamma_2$, we have $V_1=V_2$ naturally. Also, since the colouring of $\Gamma$ is determined by the choice of $\gamma$ in the orthogonal complement to $A_i$, $V$ is a $\P^{N-i-1}$-bundle over $V_1=V_2$, hence
\[
\chi(\tilde{V})=\chi(V)=(N-i)\chi(V_1)=(N-i-1)\chi(V_1)+\chi(V_2)=(N-i-1)\chi(\tilde{V_1})+\chi(\tilde{V_2}). \]
Now,
\begin{eqnarray*}
\chi(\M(\Gamma))&=&\chi(\M(\Gamma)\backslash V)+\chi(\tilde{V})-\chi((\M(\Gamma)\backslash V)\cap \tilde{V})\\
&=& \chi(\M(\Gamma)\backslash V)+(N-i-1)\chi(\tilde{V}_1)+\chi(\tilde{V}_2)-\chi(\tilde{V}\backslash V).
\end{eqnarray*}
For case (ii), there is a colouring of $\Gamma_1$ but no colouring of $\Gamma_2$, and for case (iii) there is a colouring for $\Gamma_2$ but no colouring of $\Gamma_1$. Hence $\M(\Gamma)\backslash V$ is the disjoint union of $\M(\Gamma_2)\backslash V_2$ and a $\P^{N-i-2}$-bundle over $\M(\Gamma_1)\backslash V_1$, because of the number of colourings of $\Gamma$ corresponding to cases (ii) and (iii). Similarly, $\tilde{V}\backslash V$ is a disjoint union of $\tilde{V_2}\backslash V_2$ and a $\P^{N-i-2}$-bundle over $\tilde{V_1}\backslash V_1$. Therefore,
\begin{eqnarray*}
\chi(\M(\Gamma)) &=& \chi(\M(\Gamma_2)\backslash V_2)+\chi(\tilde{V}_2)-\chi(\tilde{V}_2\backslash V_2) \\
&+& (N-i-1)(\chi(\M(\Gamma_1))+\chi(\tilde{V}_1)-\chi(\tilde{V}_1\backslash V_1))\\
&=& \chi(\M(\Gamma_2))+(N-i-1)\chi(\M(\Gamma_1))
\end{eqnarray*}
as required.
\end{proof}

\subsection{Move 5}
\begin{lem}\label{move 5} For $j>k\geq 1$, we have the equation in figure \ref{fig8}.
\begin{figure}[h]\begin{eqnarray*}\chi\left(\M\left(\begin{tikzpicture}[baseline=-0.65ex]
\draw (0,-1)-- (0,1);
\draw (2,-1)-- (2,1);
\draw (0,-0.33) to [out=270,in=180] (0.5,-0.5) to [out=0,in=180] (1.5,-0.5) to [out=0, in=90] (2,-0.66);
\draw (0,0.33) to [out=90, in=180] (0.5,0.5) to [out=0,in=180] (1.5,0.5) to [out=0, in=270] (2,0.66);
\draw (0,-1.25) node {$1$};
\draw (1.5,-1.25) node {$i+j-1$};
\draw (0.45,0) node {$i+k$};
\draw (0,1.25) node {$i$};
\draw (1, -0.75) node {$i+k-1$};
\draw (1.6,0) node {$j-k$};
\draw (1, 0.66) node {$k$};
\draw (2,1.25) node {$j$};
\end{tikzpicture}\right) \right)
&=& \binom{j-1}{k-1} \chi\left(\M\left(
\begin{tikzpicture}[baseline=-0.65ex]
\draw (0,-1) to [out=90,in=270] (0.5,-0.33) to [out=90,in=270] (0.5,0.33) to [out=90,in=270] (0,1);
\draw (1,-1) to [out=90,in=270] (0.5,-0.33);
\draw (1,1) to [out=270,in=90] (0.5,0.33);
\draw (0, -1.25) node {$1$};
\draw (1,-1.25) node {$i+j-1$};
\draw (0,1.25) node {$i$};
\draw (1,1.25) node {$j$};
\draw (1,0) node {$i+j$};
\end{tikzpicture}\right) \right)\\
&&
+\binom{j-1}{k}
\chi\left(\M\left(\begin{tikzpicture}[baseline=-0.65ex]
\draw (0,-1) -- (0,1);
\draw (1,-1) -- (1,1);
\draw (0,0.33) to [out=270,in=180] (0.5,0) to [out=0,in=90] (1,-0.33);
\draw (0,1.25) node {$i$};
\draw (0,-1.25) node {$1$};
\draw (1,1.25) node {$j$};
\draw (0.8,-1.25) node {$i+j-1$};
\draw (0.5,0.3) node {$i-1$};
\end{tikzpicture}\right) \right) \end{eqnarray*}\caption{}\label{fig8}\end{figure}
\end{lem}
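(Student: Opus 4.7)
The strategy is to stratify $\M(\Gamma)$ according to whether the line $\alpha$ (the colour-$1$ edge) is contained in the $i$-plane $A$, and then map each stratum to a portion of $\M(\Gamma_1)$ or $\M(\Gamma_2)$ via an explicit Grassmannian fibre bundle. Denote a colouring of $\Gamma$ by $(\alpha, A, H, G)$; the remaining decorations $E = A \oplus H$, $F = \alpha^\perp \cap E$, $B = G \oplus H$ and $C = F \oplus G$ are then determined. Write
\[
\M_A = \{\alpha \subset A\}, \qquad \M_B = \{\alpha \not\subset A\}
\]
for the resulting closed/open decomposition of $\M(\Gamma)$ (equivalently $H \subset F$ vs.\ $F + H = E$). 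Partition $\M(\Gamma_1) = \M(\Gamma_1)_A \sqcup \M(\Gamma_1)_B$ by the same condition, and $\M(\Gamma_2) = M_1 \sqcup M_2$ by whether $B \subset A^\perp$. Additivity of the Euler characteristic on constructible sets then reduces the lemma to computing and assembling each piece.

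On $\M_B$ the conditions $G \perp F$ and $F + H = E$ force $G \subset E^\perp \subset A^\perp$, so $A \perp B$ and $D := A \oplus B$ is a bona fide $(i+j)$-plane. The assignment $(\alpha, A, H, G) \mapsto (D, \alpha, A) \in \M(\Gamma_1)_B$ has fibre consisting of $k$-planes $H$ in $B = D \cap A^\perp$ containing the non-zero line $\ell := \langle \alpha - \operatorname{proj}_A \alpha \rangle \subset B$; this fibre is homeomorphic to $\G(k-1, j-1)$, and local triviality follows from the transitive $\mathrm{U}(N)$-action on $\M(\Gamma_1)_B$. Hence $\chi(\M_B) = \binom{j-1}{k-1}\chi(\M(\Gamma_1)_B)$.

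On $\M_A$ the map $(\alpha, A, H, G) \mapsto (\alpha, A, B)$ lands in $\M(\Gamma_2)$ once one checks $B \perp (A \cap \alpha^\perp)$, and its fibre is $\G(k, B \cap A^\perp)$; since $B \subset \langle \alpha \rangle \oplus A^\perp$, the intersection $B \cap A^\perp$ has dimension $j$ on $M_1$ and $j-1$ on $M_2$, giving
\[
\chi(\M_A) = \binom{j}{k}\chi(M_1) + \binom{j-1}{k}\chi(M_2).
\]
The crucial observation is the natural homeomorphism $M_1 \cong \M(\Gamma_1)_A$ via $(\alpha, A, B) \leftrightarrow (A \oplus B, \alpha, A)$, giving $\chi(M_1) = \chi(\M(\Gamma_1)_A)$. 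Pascal's identity $\binom{j}{k} = \binom{j-1}{k-1} + \binom{j-1}{k}$ then rearranges the expression for $\chi(\M_A)$ into $\binom{j-1}{k-1}\chi(\M(\Gamma_1)_A) + \binom{j-1}{k}\chi(\M(\Gamma_2))$, and summing with $\chi(\M_B)$ yields the claimed identity.

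The main obstacle will be the careful book-keeping needed to verify that every orthogonality constraint in the Lobb--Zentner definition of $\M(\Gamma)$ is implied by the conditions tracked above, in particular that $G \perp F$ holds automatically via $F^\perp = \langle \alpha \rangle + (A^\perp \cap H^\perp)$ in Case A and $F^\perp \supset E^\perp$ in Case B, together with confirming that each of the stratum-by-stratum assignment maps is indeed a locally trivial fibre bundle so that multiplicativity of $\chi$ applies.
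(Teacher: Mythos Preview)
Your stratification and fibre computations are correct and recover exactly the paper's three pieces: your $\M_B$ is the paper's case~(ii) locus, your preimage of $M_2$ is case~(iii), and your preimage of $M_1$ is case~(i), with the identification $M_1\cong\M(\Gamma_1)_A$ playing the role of the paper's $V_1=V_2$. The genuine methodological difference is in how the pieces are reassembled. The paper thickens the closed stratum via Hardt triviality and then runs Mayer--Vietoris; you instead invoke additivity of the Euler characteristic over a locally closed decomposition. Your route is shorter and avoids Hardt's theorem, but you should make explicit that on the non-compact open strata $\M_B$, $M_2$, $\M(\Gamma_1)_B$ you are really using the compactly supported Euler characteristic $\chi_c$ (which is what is additive and multiplicative in fibrations), and that $\chi_c=\chi$ on the compact spaces $\M(\Gamma)$, $\M(\Gamma_i)$, $\M_A$, $M_1$ so that the final identity is about ordinary $\chi$.

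There is one actual gap. Your justification ``local triviality follows from the transitive $\mathrm{U}(N)$-action on $\M(\Gamma_1)_B$'' does not work: $\Gamma_1$ is only the local picture inside a larger graph, and $\mathrm{U}(N)$ is certainly not transitive on colourings of that larger graph. It is not even transitive on the local data $(\alpha,A,B)$ with $A\perp B$ and $\alpha\notin A$, since the number $|{\operatorname{proj}_A\alpha}|^2/|\alpha|^2$ is a $\mathrm{U}(N)$-invariant. The correct argument is the one you almost wrote down: over $\M(\Gamma_1)_B$ the assignment $p\mapsto B(p)$ is a tautological rank-$j$ sub-bundle of the trivial $\C^N$-bundle, the line $\ell$ is a tautological line sub-bundle of it (well defined precisely because $\alpha\not\subset A$), and $\M_B$ is the Grassmann bundle $\G_{k-1}(B/\ell)$ of the quotient, hence genuinely locally trivial with fibre $\G(k-1,j-1)$. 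The same vector-bundle description gives the local triviality over $M_1$ and $M_2$.
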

\begin{proof}
As before, let $\Gamma$ denote the diagram on the left-hand side, and $\Gamma_1$ and $\Gamma_2$ denote the two diagrams on the right-hand side respectively. Consider the colouring of $\Gamma$ shown in figure \ref{fig7}.

\begin{figure}[h]\[
\begin{tikzpicture}[baseline=-0.65ex,scale=1.5]
\draw (0,-1)-- (0,1);
\draw (2,-1)-- (2,1);
\draw (0,-0.33) to [out=270,in=180] (0.5,-0.5) to [out=0,in=180] (1.5,-0.5) to [out=0, in=90] (2,-0.66);
\draw (0,0.33) to [out=90, in=180] (0.5,0.5) to [out=0,in=180] (1.5,0.5) to [out=0, in=270] (2,0.66);
\draw (-0.4,-0.75) node {$\alpha$};
\draw (2.5,-0.75) node {$A_{i+j-1}$};

\draw (-0.4,0.75) node {$A_i$};

\draw (1, 0.66) node {$A_k$};
\draw (2.5,0.75) node {$A_j$};
\end{tikzpicture}\]\caption{}\label{fig7}\end{figure}

Again, there are three cases to consider:
\begin{enumerate}[(i)]
\item $\alpha\subset A_i$ and $\alpha\perp A_{i+j-1}$
\item $\alpha\subset A_j$ and $\alpha\perp A_i$
\item $\alpha\subset A_i$ and $\alpha$ not orthogonal to $A_{i+j-1}$
\end{enumerate}
To give a colouring of $\Gamma$ in case (i), a fixed $(i-1)$-plane equal to $A_i\backslash \alpha$ must flow left into $A_i$, so the $k$-plane $A_k$ can be chosen arbitrarily from the orthogonal complement of $A_i\backslash \alpha$ in $A_{i+j-1}$, which is a $j$-plane. In this case, there is a unique colouring of each of $\Gamma_1$ and $\Gamma_2$.

In case (ii), we must have $\alpha\subset A_k$, so a colouring of $\Gamma$ is given by choosing a $(k-1)$-plane from the orthogonal complement of $A_i$ in $A_{i+j-1}$, which is a $(j-1)$-plane. In this case, there is a unique colouring of $\Gamma_1$, but no colouring of $\Gamma_2$.

In case (iii), $\alpha\perp A_k$ so a colouring of $\Gamma$ is given by choosing a $k$-plane from the orthogonal complement of $A_i\backslash
\alpha$ in $\alpha^\perp\cap A_{i+j-1}$, which is a $(j-1)$-plane. In this case, there is no permissible colouring of $\Gamma_1$, but a unique colouring of $\Gamma_2$.

As before, evaluation at $\alpha$, $A_i$, $A_j$ and $A_{i+j-1}$ gives an algebraic map from each of $\M(\Gamma)$, $\M(\Gamma_1)$, $\M(\Gamma_2)$ to $\P^{N-1}\times \G(i,N)\times \G(j,N)\times \G(i+j-1,N)$. Let $\Delta$ be the subvariety given by the condition that $\alpha\subset A_i$ and $\alpha\perp A_{i+j-1}$, and let $V$, $V_1$ and $V_2$ be the preimages of this set in each of $\M(\Gamma)$, $\M(\Gamma_1)$ and $\M(\Gamma_2)$. By Hardt triviality, we find an open set $U$ containing $\Delta$ such that the preimages $\tilde{V}$, $\tilde{V_1}$, $\tilde{V_2}$ are homotopic to $V$, $V_1$, $V_2$ respectively.

In case (i) there is a unique colouring of both $\Gamma_1$ and $\Gamma_2$, so $V_1=V_2$, and $V$ is a $\G(k,j)$-bundle over $V_1=V_2$. Hence
\[ \chi(V)=\binom{j}{k}\chi(V_1)=\binom{j-1}{k-1}\chi(V_1)+\binom{j-1}{k}\chi(V_2) \]
using binomial identities.

As before, outside of $V$ we have colourings corresponding to cases (ii) and (iii), which induce colourings on exactly one of $\Gamma_1$ and $\Gamma_2$. Hence $\M(\Gamma)\backslash V$ is a disjoint union of a $\G(k,j-1)$-bundle over $\M(\Gamma_2)\backslash V_2$ and a $\G(k-1,j-1)$-bundle over $\M(\Gamma_1)\backslash V_1$, with similar relations between $\tilde{V}\backslash V$ and $\tilde{V_1}\backslash V_1$ and $\tilde{V}_2\backslash V_2$. Hence
\begin{eqnarray*}
\chi(\M(\Gamma)) &=& \chi(\M(\Gamma)\backslash V)+\chi(V)-\chi(\tilde{V}\backslash V)\\
&=& \binom{j-1}{k-1}(\chi(\M(\Gamma_1)\backslash V_1)+\chi(V_1)-\chi(\tilde{V}_1\backslash V_1))\\
&& \, +\binom{j-1}{k}(\chi(\M(\Gamma_2)\backslash V_2)+\chi(V_2)-\chi(\tilde{V}_2\backslash V_2))\\
&=& \binom{j-1}{k-1}\chi(\M(\Gamma_1))+\binom{j-1}{k}\chi(\M(\Gamma_2))
\end{eqnarray*}
as required.
\end{proof}
This concludes the proof of Theorem \ref{main}.

\bibliographystyle{plain}
\bibliography{bibliography}

\end{document}